\newtheorem{thm}{Theorem} 
\let\paragraph\subsection
\title{The Colorful Ring of Partitions}
\author{Oliver Knill}
\date{9/13/2024}
\address{Department of Mathematics \\ Harvard University \\ Cambridge, MA, 02138 }
\subjclass{}
\keywords{Partitions, Number Systems, Manipulatives, Creativity in Education}
\begin{document}
\maketitle


\begin{abstract}
We visualize the identity 
$p(n) = \sum_{k=1}^n \sigma(k) p(n-k)/n$ for the partition function 
$p(n)$ involving the divisor function $\sigma$, add comments on the history
of visualizations of numbers, illustrate how different mathematical
fields play together when proving $\lim_{n \to \infty} p(n)^{1/n}=1$ and 
introduce the modular ring of partitions. 
\end{abstract} 

\section{Introduction}

\paragraph{}
As far as we know, the earliest mathematicians worked with the 
help of {\bf manipulatives} in the form of sticks, pebbles and ropes. 
Finger or toe cardinalities might have initiated the {\bf decimal} or 
{\bf base 20} systems, divisibility considerations the hexadecimal number system.
Marks were carved into bones, clay, stone or 
bark \cite{Katz2007,guinness,boyer}. 
Numbers were then written on Clay tablets or Papyrus or
encoded in knots \cite{Locke1912}. While historical discoveries are correlated to
pedagogical time lines \cite{Eves}, history is always an effective 
ingredient for teaching \cite{Katz2000}. Manipulatives are not only relevant 
historically or for teaching, they help to build intuition, even in a time of 
smartphones, tablets, virtual reality and artificial intelligence. 
The use of physical manipulative has flared up in the 
3D printing age \cite{CFZ,Segerman2016}. Dubbed as the {\bf forth 
industrial evolution} \cite{Rifkin}, it was recently over shadowed by 
the AI revolution \cite{AttentionIsAllYouNeed}.
Manipulatives might reappear more in virtual reality set-ups, once interfaces 
become fully VARK: {\bf visual}, {\bf auditory}, {\bf read related} 
or {\bf kinesthetic} \cite{VARK}.

\begin{figure}[!htpb]
\scalebox{0.03}{\includegraphics{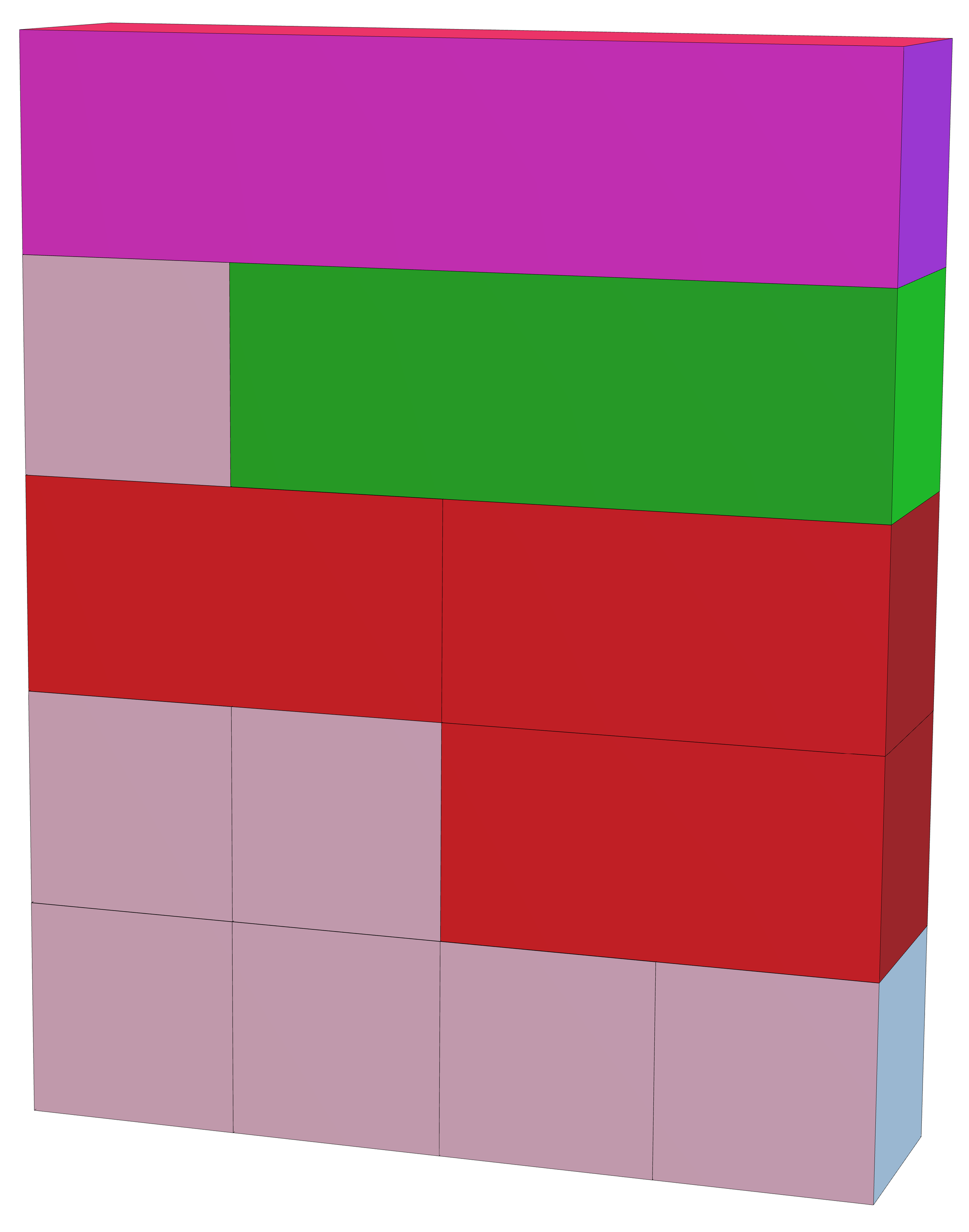}}
\hspace{9mm}
\scalebox{0.45}{\includegraphics{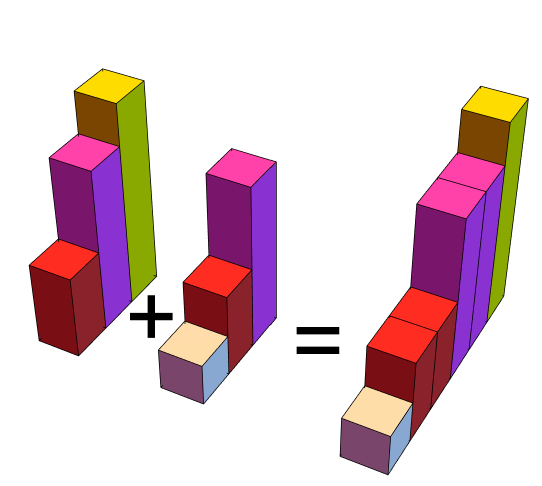}}
\scalebox{0.45}{\includegraphics{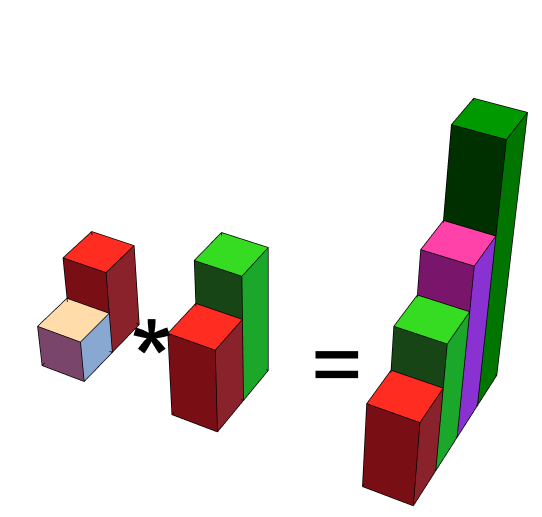}}
\label{Partitions}
\caption{
Partitions like the $p(4)=5$ partitions of $5$ generalize numbers. 
There is addition and multiplication in the ring $(\mathcal{P},+,*,0,1)$. Note that $1*1=1$
but $1+1=(1,1) \neq (2)$. While $(2,2)=(2)+(2)$, the partition $(4)$ is not the sum of
smaller partitions; it is an {\bf additive partition prime}. Every $n=(n_1,\dots,n_k)$ with either
$|n|=\sum_{j=1}^k n_j$ or $k$ a rational prime is a {\bf multiplicative partition prime} 
but there are more: $(1,2,3,4)$ is a multiplicative partition prime, even so $|n|=1+2+3+4$, 
and $k=4$ are not prime. 
}
\end{figure}

\section{Partitions as numbers}

\paragraph{}
To illustrate how manipulatives can lead to {\bf innovation}, we look at
partitions as ``generalized numbers" enlarging ``natural numbers"
that initiate the build-up
$\mathbb{N} \subset \mathbb{Z}  \subset \mathbb{Q} \subset \mathbb{R} \subset 
\mathbb{C} \subset \mathbb{H} \subset \mathbb{O}$. The generalization is geometric:
we can ``add "finite simple graphs using the join operation \cite{Zykov}. 
After completing the monoid $(\mathcal{G},\oplus)$  to a group, it leads 
with the large multiplication \cite{Sabidussi} to the {\bf Sabidussi ring} 
$(\mathbb{G},\oplus,\otimes,0,1)$ in which the empty graph is the zero element and 
the $1$-point graph $1$ is the unit. 
This ring is isomorphic to the {\bf Shannon ring} $(\mathcal{Z},+,*,0,1)$, 
in which $+$ is the {\bf disjoint union} and $*$ is the 
{\bf Shannon multiplication} \cite{Shannon1956}. See 
\cite{HammackImrichKlavzar,Shemmer,VisualizationGraphProducts,ArithmeticGraphs,
StrongRing, numbersandgraphs}. 

\paragraph{}
The upshot is that there is a build-up $\mathcal{N} \subset \mathcal{Z}  
\subset \mathcal{Q} \subset \mathcal{R} \subset \mathcal{C} \subset \mathcal{H} \subset \mathcal{O}$
also for partitions $\mathcal{P} \sim \mathcal{Z}$. 
We do not have division algebras because the list of real normed division algebras is limited to to the 
algebras $\mathbb{R},\mathbb{C},\mathbb{H},\mathbb{O}$ by {\bf Hurwitz theorem} 
\cite{Hurwitz1922} and $\mathcal{R}$ contains $\mathbb{R}$ so that
also $\mathcal{C},\mathcal{H},\mathcal{O}$ would be division algebras which is impossible. 
However, the {\bf partition complex numbers} $\mathcal{C}$ carry a {\bf Banach algebra structure}.
This can be done for any set of networks as generators. The simplest is to start with 
one single network which then completes to the completes to a Banach algebra isomorphic 
to the {\bf Wiener algebra} $A(\mathbb{T})$. See \cite{RemarksArithmeticGraphs}. 

\paragraph{}
Partitions are relevant in the context of the {\bf discrete Sard theorem} \cite{ManifoldsPartitions}:
if we take a discrete $d$-manifold $G$ and a $(k+1)$-partite graph $P=K_{n_0,\dots, n_k}$ and a
function $f: V(G) \to V(P)$ such that all facets $F$ in $P$ are reached, then the {\bf level 
surface} $\{ x \in G, f(x)$ contains at least an element in $F \}$, is either empty or then a 
$(d-k)$-manifold. Always. This is a remarkable result given that in the continuum, level sets 
can be singular like $x^2+y^2-z^2=1$. 

\paragraph{}
{\bf The ring of partitions} $\mathcal{P}$ is a subring of the Sabidussi ring 
$\mathcal{S}$. Its elements are multi-partite graphs, elements generated by zero
dimensional subgraphs.  
The integers $\mathcal{Z}$ are the subring of the Sabidussi ring generated by complete graphs. 
In the partition ring, the concept of a number like "5" is richer. 
It contains $7$ flavors of $5$, the $7$ partitions of $5$. 
The addition is the concatenation. This comes up naturally in the
Cuisenaire picture \cite{CuisenaireGattegno}. 
The multiplication can be visualized geometrically also.

\begin{thm}
The ring of partitions $\mathcal{P}$ is an extension of the ring $\mathbb{Z}$ of integers and
contained in the Sabidussi ring of graphs $\mathcal{G}$ which is isomorphic to the 
Shannon ring of graphs $\mathcal{Z}$. A partition $n$ that adds up to a rational prime
$p=|n|$ or contains a rational prime $k(n)=p$ number of elements is a multiplicative 
partition prime.  
\end{thm}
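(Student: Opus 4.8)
The plan is to reduce both assertions to the elementary arithmetic of two invariants of a partition, borrowing the graph-theoretic scaffolding already in place. First I would fix the dictionary: a partition $n=(n_1,\dots,n_k)$ is identified with the complete multipartite graph $K_{n_1,\dots,n_k}$, i.e. the Zykov join $\overline{K_{n_1}}\oplus\cdots\oplus\overline{K_{n_k}}$ of the zero-dimensional (edgeless) graphs on $n_1,\dots,n_k$ vertices. Under this map concatenation becomes the join, since joining two complete multipartite graphs merges their parts, so $(\mathcal P,+)$ embeds as the additive sub-monoid of $(\mathcal G,\oplus)$ generated by the $0$-dimensional graphs; passing to the Grothendieck completion turns this into a subring $\mathcal P\subseteq\mathcal G$. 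The unit is $(1)=K_1$, and the repeated sum $(1,\dots,1)$ with $m$ ones maps to $K_{1,\dots,1}=K_m$, so the complete graphs — the image of $\mathbb Z$ — lie inside $\mathcal P$; this gives the extension $\mathbb Z\subset\mathcal P$, which is proper because $(2)\neq(1,1)$. The isomorphism $\mathcal G\cong\mathcal Z$ with the Shannon ring is the complementation functor, which I would simply cite from the referenced work.

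For the second assertion I would introduce the two maps $|n|=\sum_j n_j$ (the number of vertices) and $k(n)$ (the number of parts) and record the product rule: in the Shannon picture a partition is a disjoint union of cliques, the product distributes over disjoint union, and $K_a\boxtimes K_b=K_{ab}$, so
\[
(a_1,\dots,a_j)*(b_1,\dots,b_l)=(a_p b_q)_{1\le p\le j,\,1\le q\le l}.
\]
Hence both invariants are multiplicative, $|a*b|=|a|\,|b|$ and $k(a*b)=k(a)\,k(b)$, and the only unit is $(1)$, the unique partition with $|n|=1$. The $|n|=p$ case is then immediate: any factorization $n=a*b$ forces $|a|\,|b|=p$, so one factor has size $1$ and equals $(1)$, and $n$ is irreducible.

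The hard part will be the clause ``$k(n)=p$ prime''. Multiplicativity of $k$ gives $k(a)k(b)=p$, so one factor, say $a$, has $k(a)=1$ and is a single-part partition $(m)$ — but for $m\ge 2$ this is \emph{not} a unit, so the reduction does not close. Concretely $(2,4,6)$ has the prime part-count $k=3$ yet factors as $(2)*(1,2,3)$, so the implication as literally stated needs a hypothesis ruling such scalars out; the natural fix is to assume the parts are globally coprime, $\gcd(n_1,\dots,n_k)=1$, which is exactly the condition preventing a single-part factor $(m)$, $m\ge2$, from dividing $n$, and with that proviso the argument mirrors the size case. I would close by noting that these conditions are only sufficient: $(1,2,3,4)$ has composite $|n|=10$ and $k=4$ yet admits no factorization — no splitting of $\{1,2,3,4\}$ into a product array and no common factor to peel off — which is precisely the phenomenon the figure advertises and the reason the converse must fail.
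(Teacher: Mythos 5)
Your ring-theoretic setup is the same as the paper's: the paper likewise realizes $\mathcal{P}$ as the subring of the Sabidussi ring generated by the zero-dimensional graphs, gets $\mathbb{Z}\subset\mathcal{P}$ from $K_1\oplus\cdots\oplus K_1=K_m$, cites complementation for $\mathcal{G}\cong\mathcal{Z}$, and reduces the primality claim to the two homomorphisms $n\mapsto|n|$ and $n\mapsto k(n)$. Your handling of the case $|n|=p$ is exactly the intended argument, with the one step the paper leaves implicit (``a multiplicative map taking a prime value forces one factor into the fiber over $1$'') spelled out.

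Where you diverge, you are right and the paper is not. The paper's proof stops after listing the identities $|n*m|=|n|\,|m|$ and $k(n*m)=k(n)k(m)$ (the latter is even misprinted as $k(n*m)=|n|\,|m|$) and never checks that the fiber of the homomorphism over $1$ consists of units. For $|\cdot|$ it does, since $|a|=1$ forces $a=(1)$; for $k$ it does not, since $k(a)=1$ only forces $a=(m)$, which is not invertible for $m\geq 2$. Your counterexample $(2,4,6)=(2)*(1,2,3)$ is valid against the literal statement: $k=3$ is prime, yet neither factor is a unit, so $(2,4,6)$ is reducible in precisely the sense the paper itself uses later (``cannot be factored into smaller components''); $(2,2)=(2)*(1,1)$ is an even smaller instance. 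So the defect is in the theorem's second clause, not in your argument. Your repair --- adding $\gcd(n_1,\dots,n_k)=1$, which is exactly the condition excluding a scalar factor $(m)$ with $m\geq 2$ --- is the minimal correct fix, and it is tacitly what the paper relies on in its discussion of $(3,4,5,2)$, where only factorizations with $k(a)=k(b)=2$ are considered because the parts there are coprime. Your closing observation that the two conditions are merely sufficient agrees with the paper's own example $(1,2,3,4)$.
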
 
\begin{proof}
$\mathcal{Z}$ is generated by complete graphs. $\mathcal{P}$ is generated by 
$0$-dimensional graphs. Note that $K_1 \oplus K_1 \cdots \oplus K_1=K_n$ is a 
complete graph so that $\mathcal{Z} \subset \mathcal{P}$. $\mathcal{G}$ is generated by all graphs.
The duality in the form of {\bf graph complement} represents $\mathcal{Z}$ in the Shannon ring
as generated by zero dimensional graphs and $\mathcal{P}$ generated by complete 
graphs and $\mathcal{G}$ generated by all graphs. 
To see the prime statement, 
note that both $n=(n_1, \dots, n_k) \to |n|=\sum_{j=1}^{k(p)} n_j$ and $n \to k(n)$ are
{\bf ring homomorphisms} from $\mathcal{P}$ to $\mathbb{Z}$. 
The homomorphism statements follow from 
$|n+m| = |n|+|m|, k(n+m)=k(n)+k(m), |n*m|=|n| |m|, k(n*m)=|n| |m|$. 
\end{proof} 

\section{Additive number theory}

\paragraph{}
We illustrate the power of colored sticks, we use
a recursion theorem which was obtained in \cite{Knill1982}. It was found independently 
with the help of Cuisenaire material. It turns out that the mathematical theorem result was known 
since more than 100 years \cite{Ostmann1956,AndrewsPartitions} and probably was known 
to Euler already. Let $\sigma(n)$ denote the {\bf divisor $\sigma$-function} which gives the
sum of the proper divisors of an integer $n$. 
The {\bf partition number} $p(n)$ gives the number of different ways to write
$n$ as a sum of smaller positive integers where the order of the summands does not matter. 

\begin{thm}[1918] $p(n) = \sum_{i=1}^n \sigma(i) p(n-i)/n$ \end{thm}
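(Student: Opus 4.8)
The plan is to work with Euler's generating function for the partition function and to extract the recurrence by logarithmic differentiation. Let $P(x) = \sum_{n \ge 0} p(n)\, x^n$ be the ordinary generating function, regarded as a formal power series (equivalently, as an analytic function on $|x| < 1$). Euler's product formula gives
\[
  P(x) = \prod_{k \ge 1} \frac{1}{1 - x^k},
\]
since deciding, for each part size $k$, how many copies $m_k$ of $k$ to include corresponds exactly to expanding each factor $(1-x^k)^{-1} = \sum_{m \ge 0} x^{km}$ and reading off the total $\sum_k k\, m_k = n$. I would either cite this as standard or reprove it in one line by the same bookkeeping that the colored-sticks picture makes visible.

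First I would take the logarithmic derivative. From $\log P(x) = -\sum_{k \ge 1} \log(1 - x^k)$ one gets $P'(x)/P(x) = \sum_{k \ge 1} k x^{k-1}/(1 - x^k)$, and multiplying by $x$ yields
\[
  \frac{x P'(x)}{P(x)} = \sum_{k \ge 1} \frac{k x^k}{1 - x^k}.
\]
The next step is to recognize the right-hand side as a Lambert series for the divisor function: expanding each term as $k x^k/(1 - x^k) = \sum_{m \ge 1} k\, x^{km}$ and collecting the coefficient of $x^n$ amounts to summing $k$ over all factorizations $n = km$, i.e.\ over all divisors $k \mid n$, which produces $\sigma(n) = \sum_{d \mid n} d$. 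Hence $x P'(x)/P(x) = \sum_{n \ge 1} \sigma(n)\, x^n$.

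Clearing the denominator gives the key identity $x P'(x) = P(x) \sum_{n \ge 1} \sigma(n)\, x^n$, and I would then simply compare coefficients of $x^n$ on both sides. On the left, $x P'(x) = \sum_{n \ge 1} n\, p(n)\, x^n$, so the coefficient is $n\, p(n)$. On the right, the Cauchy product of $\sum_{m \ge 0} p(m)\, x^m$ with $\sum_{i \ge 1} \sigma(i)\, x^i$ has $x^n$-coefficient $\sum_{i=1}^n \sigma(i)\, p(n-i)$. Equating the two and dividing by $n$ yields the claimed recurrence.

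The routine manipulations (term-by-term differentiation, geometric-series expansion, Cauchy product) are all legitimate at the level of formal power series in $\mathbb{Q}[[x]]$, so strictly speaking no convergence analysis is required; alternatively they hold analytically on $|x| < 1$. The only genuinely substantive step is the Lambert-series reorganization that converts $\sum_k k x^k/(1 - x^k)$ into $\sum_n \sigma(n)\, x^n$, which is where the divisor function enters. The main obstacle is therefore not a hard estimate but the careful reindexing of this double sum, together with confirming that the divisor sum appearing here is the full $\sigma(n) = \sum_{d \mid n} d$ that makes the identity exact.
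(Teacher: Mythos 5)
Your proof is correct, but it takes a different route from the paper. You work entirely on the generating-function side: Euler's product for $P(x)=\sum p(n)x^n$, logarithmic differentiation to get $xP'(x)/P(x)=\sum_k kx^k/(1-x^k)$, the Lambert-series reorganization of that sum into $\sum_n \sigma(n)x^n$, and finally a Cauchy-product coefficient comparison. The paper instead argues directly on the physical arrangement of all partitions of $n$ as a rectangle of area $n\,p(n)$: it counts how many times a part (stick) of length $f$ occurs across all partitions, obtaining $f\downarrow P(n)=\sum_{i\ge 1}p(n-if)$ by a removal recursion, and then sums $f$ times this count over all $f$, observing that $p(n-k)$ is hit with total weight $\sum_{f\mid k}f=\sigma(k)$. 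The two arguments are really the same regrouping seen in two languages --- your Lambert-series reindexing ($km=n$ ranging over divisors) is exactly the paper's observation that $p(n-if)$ carries weight $f$ with $if=k$ --- but your version buys brevity and rigor for free from the formal power series calculus, while the paper's version buys the visual, manipulative-friendly interpretation that is the whole point of the article. Both are complete; your only implicit dependency is Euler's product formula, which you correctly flag as either citable or reprovable by the same bookkeeping.
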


\begin{proof} 
The arrangement of all partitions is a rectangle $P(n)$ of area $n p(n)$. 
Let $f \downarrow P(n)$ denote how many times the stick $f$ appears in the pile. 
One can now take from every row containing an $f$ one piece away to see 
$$  f \downarrow P(n) = f \downarrow P(n-f) \}| +p(n-f)  \;. $$
Repeating this gives
$$  f  \downarrow P(n) = f \downarrow P(n-f)  + f \downarrow P(n-2f) + P(n-2f) \; . $$
Therefore
$$  f \downarrow P(n)  =  \sum_{i=1} p(n-if) $$
if we assume $p(k)=0$ for negative $k$ and $p(0)=1$. 
Now, we sum up the area as a sum over all the sticks: 
\begin{eqnarray*} 
p(n) n  &=& [p(n-1) + p(n-2) + p(n-3) + p(n-4) + ... ]*1 \\
        &+& [p(n-2) + p(n-4) + p(n-6) + p(n-8) + ... ]*2 \\
        &+& [p(n-3) + p(n-6) + p(n-9) + p(n-12) + ...]*3 \\
        &+& [p(n-4) + p(n-8) + p(n-12)+ p(n-16) + ...]*4 \\
        &+& [p(n-n)                                  ]*n  \; .
\end{eqnarray*}
The theorem follows by noticing that $p(n-k)$ appears $\sigma(k)$ times in that array. 
\end{proof} 

\paragraph{}
The theorem can be seen also as a fixed point result $p = T(p)$ for a 
{\bf convolution average}
$$   \sigma * g(n) := \frac{1}{n} \sum_{k=1}^n \sigma(k) g(n-k) $$ 
with the divisor function $\sigma$. As convolution is commutative, we also have
$g *\sigma(n) = \frac{1}{n} \sum_{k=1}^n \sigma(n-k) g(k)$. 

\begin{figure}[!htpb]
\scalebox{0.18}{\includegraphics{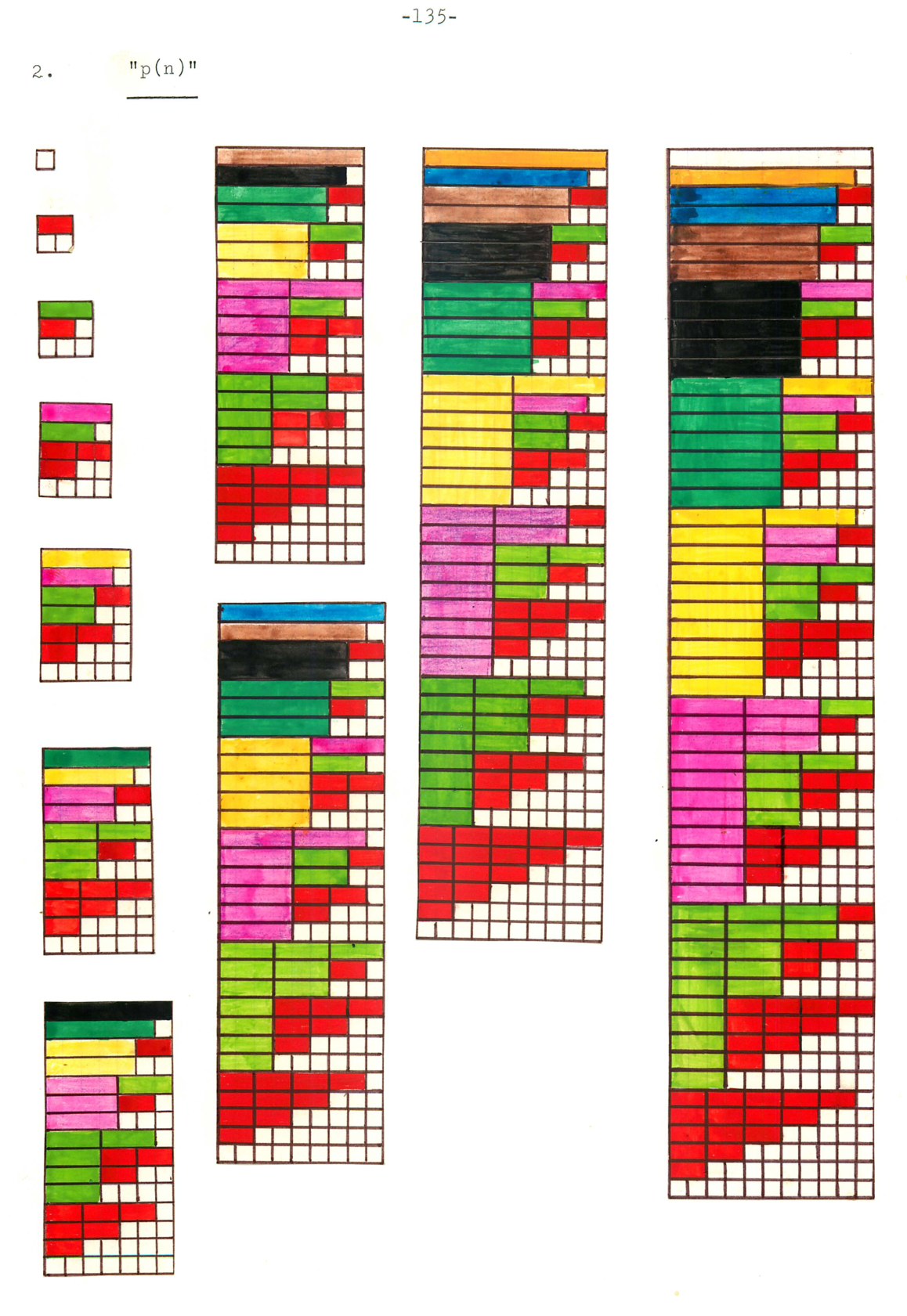}}
\scalebox{0.12}{\includegraphics{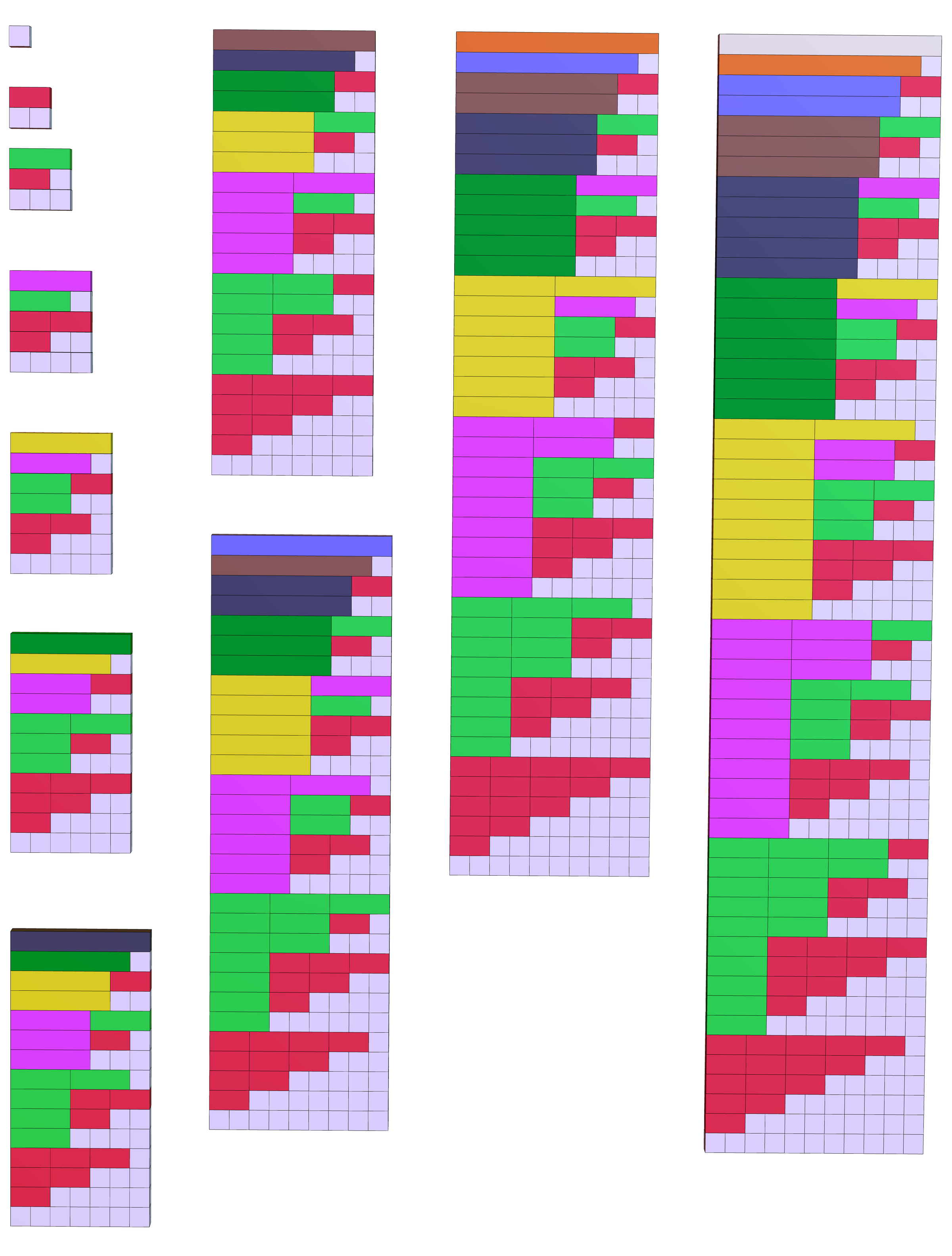}}
\label{cuisenaire}
\caption{
We see to the left first 10 partitions (see \cite{Knill1982} on page 135, 
hand drawn and pencil colorized or \cite{SJF1983}).
We have redrawn them in Mathematica in 2024. 
}
\end{figure}

\paragraph{}
The book title of the ATM resources \cite{Cane2017} shows similar arrangements for
partitions of numbers if the order matters. See also \cite{Knill1982} (page 133).
\cite{Cane2017} states {\it "After Gattegno Curriculum Graph 1974"}.
The idea of illustrating identities as such must have appeared first in 
\cite{GattegnoHoffman} page 5, interlocking staircases illustrating 
$10=1+9=2+8+ \dots=9+1=10$. We could not find any partition illustrations 
earlier than \cite{Knill1982}, also not in Gattegno's original books 
\cite{GattegnoMathematics}, nor in \cite{CuisenaireGattegno}. 

\paragraph{}
Computer algebra systems like Mathematica have the integer partition algorithm
already built in, but Euler's formula for the generating function
$$   \sum_{k=0}^{\infty} p(k) x^k = \prod_{j=1} \frac{1}{1-x^j} $$ 
\cite{Euler1753} quickly allows
to compute it from scratch. The third part of the following code uses
the implementation of the theorem. The built in Mathematica routine probably
uses the Euler Pentagonal number theorem.

\lstset{language=Mathematica} \lstset{frameround=fttt}
\begin{lstlisting}[frame=single]
Table[PartitionsP[n],{n,0,20}]
CoefficientList[Series[Product[1/(1-x^j),{j,20}],{x,0,20}],x]
F[0]:=1;F[n_]:=Sum[DivisorSigma[1,k]*PartitionsP[n-k]/n,{k,n}];
Table[F[n],{n,0,20}] 
\end{lstlisting}

\begin{figure}[!htpb]
\scalebox{0.95}{\includegraphics{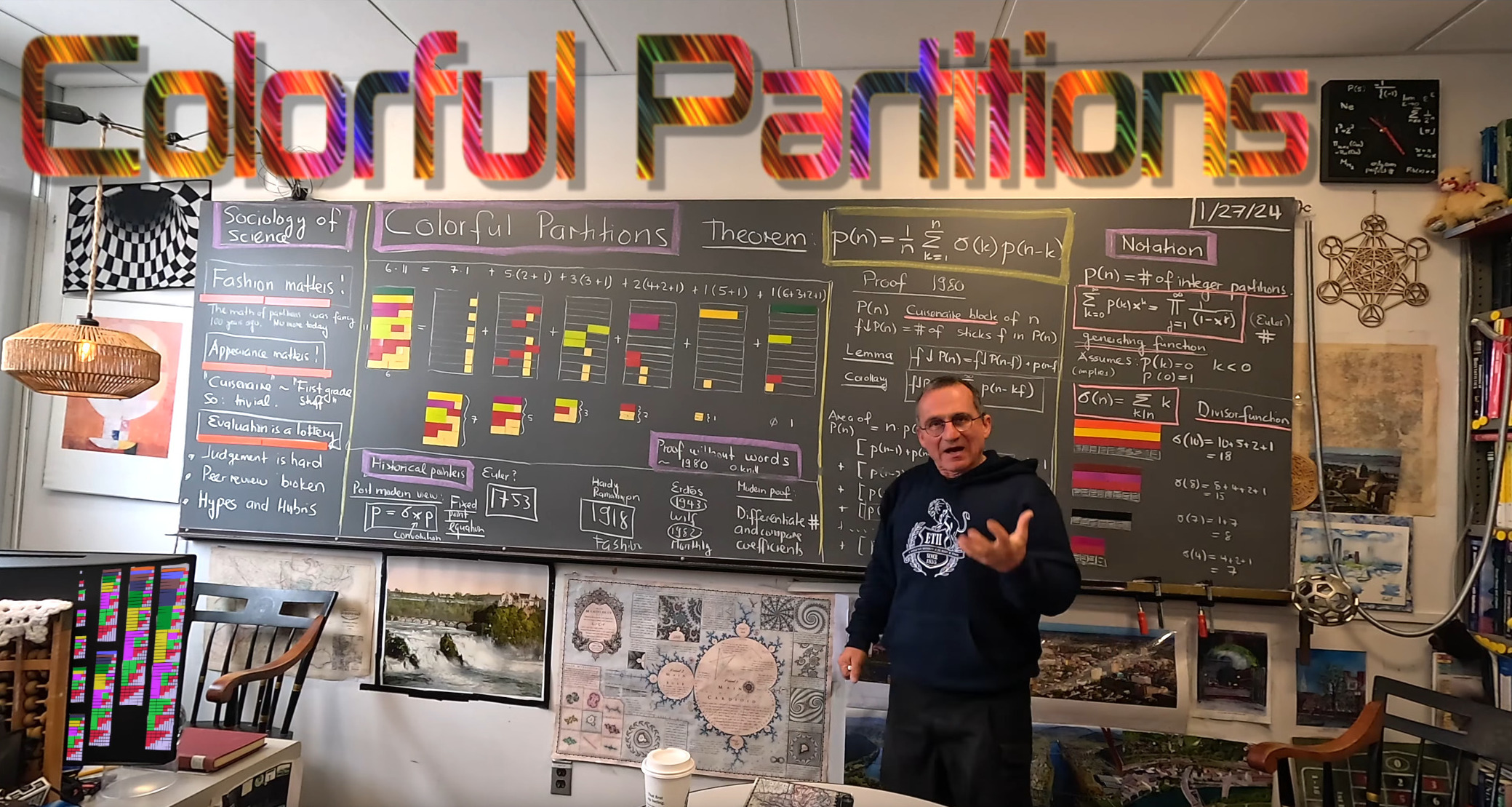}}
\label{youtube}
\caption{
The author presenting for a youtube recording on January 27th, 2024.
It illustrates the recursion formula visually, in an example
$6 * 11 = 7*1 + 5(2+1) + 3(3+1) + 2(4+2+1) + 1(5+1) + 1(6+3+2+1)$.
}
\end{figure}

\paragraph{}
As Euler noticed, the partition function and the divisor function satisfy
the same recursion. But the initial conditions are different:
$$  \sigma(n) = \sigma(n-1)+\sigma(n-2)-\sigma(n-5)-\sigma(n-7)+\sigma(n-12)+\sigma(n-15) + \cdots $$
$$  p(n) = p(n-1)+p(n-2)-p(n-5)-p(n-7)+p(n-12)+p(n-15) + \cdots  \; . $$
This is amazing because $\sigma(n)$ uses {\bf multiplicative properties}
of $n$ and $p(n)$ uses {\bf additive structures}. The initial 
conditions are different. Both assume $p(n)=\sigma(n)=0$ if $n<0$. 
The partition function starts with $(p(1),p(2))=(1,2)$ and takes $p(0)=1$.
The divisor function   starts with $(\sigma(1),\sigma(2))=(1,3)$ and uses 
$\sigma(n-n)=n$ in the recursion, when computing $p(n)$. 

\section{Dynamics}

\paragraph{}
How fast does the partition function $p(n)$ grow? The growth rates is known to be
exponential. The Hardy-Ramanujan asymptotic formula shows 
$p(n) \sim \exp(\pi \sqrt{2n/3})/(4n\sqrt{3})$. 
We repeat here a dynamical systems proof \cite{knillcotangent} 
of the following theorem: 

\begin{thm} $\limsup_n p(n)^{1/n} =1$ \end{thm}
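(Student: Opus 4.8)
The plan is to show $\limsup_n p(n)^{1/n}=1$ by proving the two inequalities separately.

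First I would establish the lower bound $\limsup_n p(n)^{1/n} \geq 1$. Since $p(n) \geq 1$ for all $n \geq 0$ (every positive integer has at least one partition, namely itself), we have $p(n)^{1/n} \geq 1$ for all $n$, so the $\limsup$ is automatically at least $1$. This direction is immediate and requires no work.

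The substance lies in the upper bound $\limsup_n p(n)^{1/n} \leq 1$, equivalently that $p(n)$ grows subexponentially: for every $\epsilon>0$ one eventually has $p(n) \leq (1+\epsilon)^n$. The plan is to extract this from the recursion $p(n) = \frac{1}{n}\sum_{k=1}^n \sigma(k)\,p(n-k)$ established in the preceding theorem, read as the fixed-point relation $p = T(p)$ for the convolution average $T$. First I would record the elementary bound $\sigma(k) \leq k^2$ (indeed $\sigma(k) = \sum_{d \mid k} d \leq \sum_{j=1}^{k} j \leq k^2$, and in fact $\sigma(k)/k$ is bounded on average, with $\frac{1}{n^2}\sum_{k=1}^n \sigma(k) \to \pi^2/12$, the convergence alluded to in the commented Mathematica snippet). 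The dynamical-systems viewpoint is to treat $T$ as a positive operator and feed in a test sequence: if $g(n) = C\,\rho^n$ is a geometric comparison sequence, then $T(g)(n) = \frac{1}{n}\sum_{k=1}^n \sigma(k)\,C\rho^{n-k} = C\rho^n \cdot \frac{1}{n}\sum_{k=1}^n \sigma(k)\rho^{-k}$, and for any fixed $\rho>1$ the factor $\frac{1}{n}\sum_{k=1}^n \sigma(k)\rho^{-k}$ is bounded (the sum $\sum_k \sigma(k)\rho^{-k}$ converges since $\sigma(k)\rho^{-k} \to 0$ geometrically), hence tends to $0$ as $n \to \infty$. Thus $T$ strictly contracts any geometric sequence of ratio $\rho>1$ for large $n$, which I would convert by induction into the statement that $p(n) \leq C\rho^n$ holds for all $n$ once $C$ is chosen large enough to absorb finitely many initial terms. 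Letting $\rho \downarrow 1$ then gives $\limsup_n p(n)^{1/n} \leq \rho$ for every $\rho>1$, hence $\leq 1$.

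I expect the main obstacle to be making the induction airtight in the passage from the decaying average factor to a uniform geometric bound. The factor $\frac{1}{n}\sum_{k=1}^n \sigma(k)\rho^{-k}$ is eventually less than $1$, but only for $n$ beyond some threshold $N_0 = N_0(\rho)$; below that threshold one must choose the constant $C$ large enough that the base cases $p(0),\dots,p(N_0)$ all satisfy $p(n) \leq C\rho^n$, and then verify that the inductive step $p(n) = T(p)(n) \leq T(C\rho^{\cdot})(n) \leq C\rho^n$ carries through for $n > N_0$ using monotonicity of $T$ in its argument. The only delicate point is confirming that $T$ is genuinely monotone and that the comparison $p(n-k) \leq C\rho^{n-k}$ feeds correctly into the weighted sum; both are routine but must be stated to avoid a circular use of the bound one is trying to prove. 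Combining the two inequalities yields $\limsup_n p(n)^{1/n}=1$.
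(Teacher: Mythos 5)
Your proposal is correct, but it proves the upper bound by a genuinely different route than the paper. You both dispose of $\limsup_n p(n)^{1/n}\geq 1$ the same way, via $p(n)\geq 1$. For the substantive inequality $\limsup_n p(n)^{1/n}\leq 1$, the paper works with the generating function $P(z)=\prod_k (1-z^k)^{-1}=1/Q(z)$ and argues via complex analysis and dynamics: Euler's pentagonal number theorem, Hadamard's lacunary series, and the Gottschalk--Hedlund theorem applied to $\log P(re^{2\pi i\alpha})=-\sum_k \log(1-r^k e^{2\pi i k\alpha})$ for Diophantine $\alpha$, concluding that $P$ has no singularity in $|z|<1$ and hence radius of convergence at least $1$. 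You instead stay entirely inside the recursion $p(n)=\frac{1}{n}\sum_{k=1}^n \sigma(k)p(n-k)$ proved earlier in the paper, bound $\sigma(k)\leq k^2$ so that $S(\rho)=\sum_k \sigma(k)\rho^{-k}<\infty$ for $\rho>1$, and run a strong induction: once $n>S(\rho)$ the factor $\frac{1}{n}\sum_{k=1}^n\sigma(k)\rho^{-k}\leq S(\rho)/n\leq 1$ closes the inductive step $p(n)\leq C\rho^n$, with $C$ chosen to cover the finitely many base cases. Your worry about the threshold $N_0(\rho)$ and the choice of $C$ is exactly the right one and is easily resolved as you describe; monotonicity is automatic since all weights $\sigma(k)/n$ are positive. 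What each approach buys: yours is elementary, self-contained given the recursion theorem, and avoids any appeal to natural boundaries or Diophantine conditions (the paper's Gottschalk--Hedlund step only controls $\log P$ along Diophantine directions, so the analyticity conclusion requires some care there); the paper's proof, by design, showcases the interplay of number theory, complex analysis and dynamical systems, which is its stated pedagogical point rather than economy of means.
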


\begin{proof}
{\bf Euler's pentagonal number theorem} tells that 
$$ Q(z)= \prod_{k=1}^{\infty} (1-z^k) = \sum_n a_n z^n \; , $$ 
where $a_n$ are the {\bf pentagonal numbers}. We also have
$$ P(z)=\sum_{n=1}^{\infty} p(n) z^n = \prod_{k=1}^{\infty} (1-z^k)^{-1} = \frac{1}{Q(z)}  \; . $$
{\bf Hadamard's lacunary theorem} \cite{kahane64} about power series 
$\sum_k c_k z^{n_k}$ shows that if $n_k+1/n_k \geq r>1$, then $r=1$ is a natural boundary.
This means that boundedness at one point $|z|=r$ would imply that the radius of convergence 
is larger than $r$. Define $g(\alpha) = (1-r e^{2\pi i\alpha})^{-1}$ so that
$f(\alpha) = \log(P(z)) = - \sum_{k=1}^{\infty} g(k \alpha)$.
For $r=|z|<1$, and $z=r \exp(2\pi i \alpha)$ with Diophantine
$\alpha$, the {\bf theorem of Gottschalk-Hedlund} \cite{KH} assures that $f$ is a 
{\bf coboundary}, meaning $f(x) = F(x+\alpha)-F(x)$, implying that the sum 
is bounded. Having so proven that the Taylor series has no singularity on
$|z|<1$, the partition function $p(n)$ satisfies $\limsup_n |p(n)|^{1/n} \geq 1$.
The other inequality $\limsup_n |p(n)|^{1/n} \leq 1$ implies together with
$p(n) \geq 1$ that $\sum_{n=1}^{\infty} p(n) z^n \geq \sum_{n=1}^{\infty} z^n$
so that the radius of convergence of the former is larger or equal than the radius
of convergence of the later, which is $1$. 
\end{proof} 

\paragraph{}
The proof illustrates how number theory, 
complex analysis, and dynamical systems theory and
Diophantine notions can come together. Such connections also appeared when studying
Dirichlet series with almost periodic coefficients \cite{KL}. 

\section{Psychology}

\paragraph{}
How do human learn, teach and do research? How do they gain intuition about mathematical 
objects. Psychological aspects of teaching have been looked at in research \cite{Hadamard1954}
or education \cite{Norton1992}. Hadamard stresses that visual thinking is key for intuition. 
Norton suggests that a number is defined through 
our own actions and mentions in the chapter "What are numbers" 
cites historical definitions of numbers like Euclid's notion 
{\it "A number is a multitude composed of units."} appearing in book VII.
The development of numbers is also linked to language and notation
\cite{EnlighteningSymbols}. A more topological approach to numbers are quipu, talking knots
\cite{Locke1912,AscherAscher}.

\paragraph{}
In \cite{GraphsGroupsGeometry}, the question was raised 
whether one can challenge even the beginning of counting. Kronecker is reported to have told 
the famous {\bf "God made the integers; all else is the work of man"} 
\cite{HawkingGodCreatedIntegers,Bell}. But are the integers really the most natural way for
counting?

\paragraph{}
In \cite{GraphsGroupsGeometry} we showed that the {\bf infinite dihedral group} $Z_2*Z_2$ generated 
by two involutions is more "natural" than $\mathbb{Z}$. This was neither opinion nor
a philosophical question; it emerged from a definition: a group is ``natural" if there exists
a metric space $X$ such that there is only one way up to isomorphism to put a group structure 
on $X$ for which all group operations are isometries. The infinite dihedral group is natural. 
The integers are not. There are also more down-to-earth reasons why the dihedral group is 
natural. Maybe the most important one is that there is no Grothendieck group 
completion from the monoid $(\mathbb{N},+,0)$ to $the group (\mathbb{Z},_,0)$ needed. 
From the natural numbers, a young student first has to be convinced about 
negative numbers using analogies like money debt or negative temperatures. In the dihedral case 
$1=ab$ and $-1=ba$. A completely different approach sees numbers as games \cite{numbersgames}. 
Donald Knuth visualized it in the novel numbers and games \cite{knuthnumbers}. 

\paragraph{}
While commutative structures have been challenged since
the emergence of quantum mechanics and under the umbrella of non-commutative mathematics \cite{Connes},
the commutativity axiom for the additive structure of the integers has never been 
put up to debate. But as argued in \cite{GraphsGroupsGeometry}, there are reasons to 
consider a group generated by two involutions more fundamental than a group by a 
single generator $1$. The dihedral approach very much is in the philosophy of reflection 
geometry \cite{Bachmann1959,JegerTransformationGeometry} where reflections are used to generate
the non-Abelian Euclidean group $\mathbb{R}^n \rtimes O(n)$ consisting of translations and orthogonal
transformations. Reflections at affine planes are all involutions. {\bf dihedral counting} 
enters geometry if we look at the group generated by the exterior derivative $d$ and its dual $d^*$. 
We have $d^2=0$ and $(d^*)^2=0$.  

\paragraph{}
Also notation and implementation is important. Good notation pays off in the long term 
by allowing to think more effectively. 
Leibniz's notations and nomenclature in calculus for example was much persuasive than Newton's 
notation. In the case of manipulatives, the analog of notation is how to use {\bf material}, 
{\bf shape}  or {\bf color} and {\bf labels} implemented in texture. Katherine Stern 
used colored cubes around a similar time than Cuisenaire. But the colors did not carry 
intuitive meaning. Other manipulatives had writings on them, making them less elegant.

\paragraph{}
In the case of Cuisinaire sticks, the colors have been carefully chosen.   
Cuisenaire thought a great deal about the psychology of numbers. He tried
to use colors which match division properties. Of course this can not be
pulled through in detail but the choice  $red=2 \to crimson=4 \to brown=8$ 
or $yellow=5 \to orange =10$    or $green=3 \to darkgreen=6 \to blue = 9$ 
and then black=7 shows taste. Each of the 4 smallest {\bf primes} $2,3,5,7$ 
in the range of 1 to 10  used. Multiples then are using similar color components.

\begin{itemize}
\item Each of the primes 2,3,5,7 starts a new color family.
\item The even numbers 2,4,6,8 have a red-purple components, 
\item The multiples of three, 3,6,9, have a green-bluish side,
\item The multiples of 5, given by 5 and 10 have a yellow-orange part.
\end{itemize}

\paragraph{}
A young mathematics student playing with these sticks can gain so additional 
understanding of numbers. There is a geometric link in the form of length. 
There are also color impressions. Also music exposure can provide {\bf auditory link} 
to numbers. The do-re-mi-fa-so-la-ti-do already introduces a modular arithmetic, in this case 
$Z_8$, the smallest three dimensional vector space. In music, there are 
other dimensions like amplitude or modulation. Music implements number systems like
the modular arithmetic on scales. But is is much richer. 

\section{History}

\paragraph{}
We take the opportunity to make a list of manipulative material used by educators. 
We see that the first attempts to build manipulatives were done at the end of the
18th century by educators like Friedrich Froebel or Ernst Tillich. Froebel is known
primarily as a philosopher from the book \cite{Froebel} who believed in the 
fundamental principle that there must be an inner connection between the pupil's mind
and the objects which are studied. The contemporary pedagogue and education reformer
Johann Heinrich Pestalozzi (1746-1827) is considered the first modern educator.
He is known for the picture that training children should involve 
{\bf "heart, mind and body"} and that nature was the best way to teach. 

\paragraph{}
As for the development for mathematical toolboxes or manipulative, see \cite{Jobbe-Duval}.
Jobbe-Duval gives as a reference about the first appearance (1947) of the Cuisenaire
rods an article \cite{BulletinCuisenaire1964}. Wikipedia dates the first appearance of
the sticks to 1945. We cite from this Wikipedia entry
{\it "In 1945, following many years of research and 
experimentation, Cuisenaire created a game consisting of coloured cardboard 
strips of various lengths that he used to teach mathematics to young children."}
We can see simple tasks like counting trees or estimating the number of trees in a forest
or to watch for Fibonacci patterns in pine cones or sunflowers or see fractions in the 
harmonies of music as examples of such teaching methodologies. 

\def\arraystretch{1.5}
\begin{center} 
\begin{tabular}{|l|l|l|}  \hline
Author               & Year       & Method    \\ \hline \hline
Friedrich Froebel    & (1782-1852)& wooden building blocks \\ \hline
Ernst Tillich        & (1780-1807)& Rechenkasten \\ \hline
Catherine Stern      & (1894-1973)& blocks \\ \hline
Maria Montessori     & (1870-1952)& barres numeriques rouges es blues \\  \hline
Georges Cuisenaire   & (1891-1975)& wooden block in 1951  \\ \hline
Mina Audemars        & (1883-1971)& 66 blocks \\ \hline
Louise Lafandel      & (1872-1971)& 66 blocks  \\ \hline
Setton Pollock's     & (1910-1983)& Color Factor system 1960ies \\ \hline
Artur Kern           & (1902-1988)& Rechenkasten \\ \hline
Paulette Calcia      & (1904-1985)& Buchettes'dor \\ \hline
Maria-Antonia Canals & (1930-)    & Reglets \\ \hline
Adam Langer          & (1836-1919)& Rechenkasten \\ \hline
Remi Brissiaud       & (1949-)    & Noumes   \\ \hline
\end{tabular}
\end{center} 

\begin{figure}[!htpb]
\scalebox{0.62}{\includegraphics{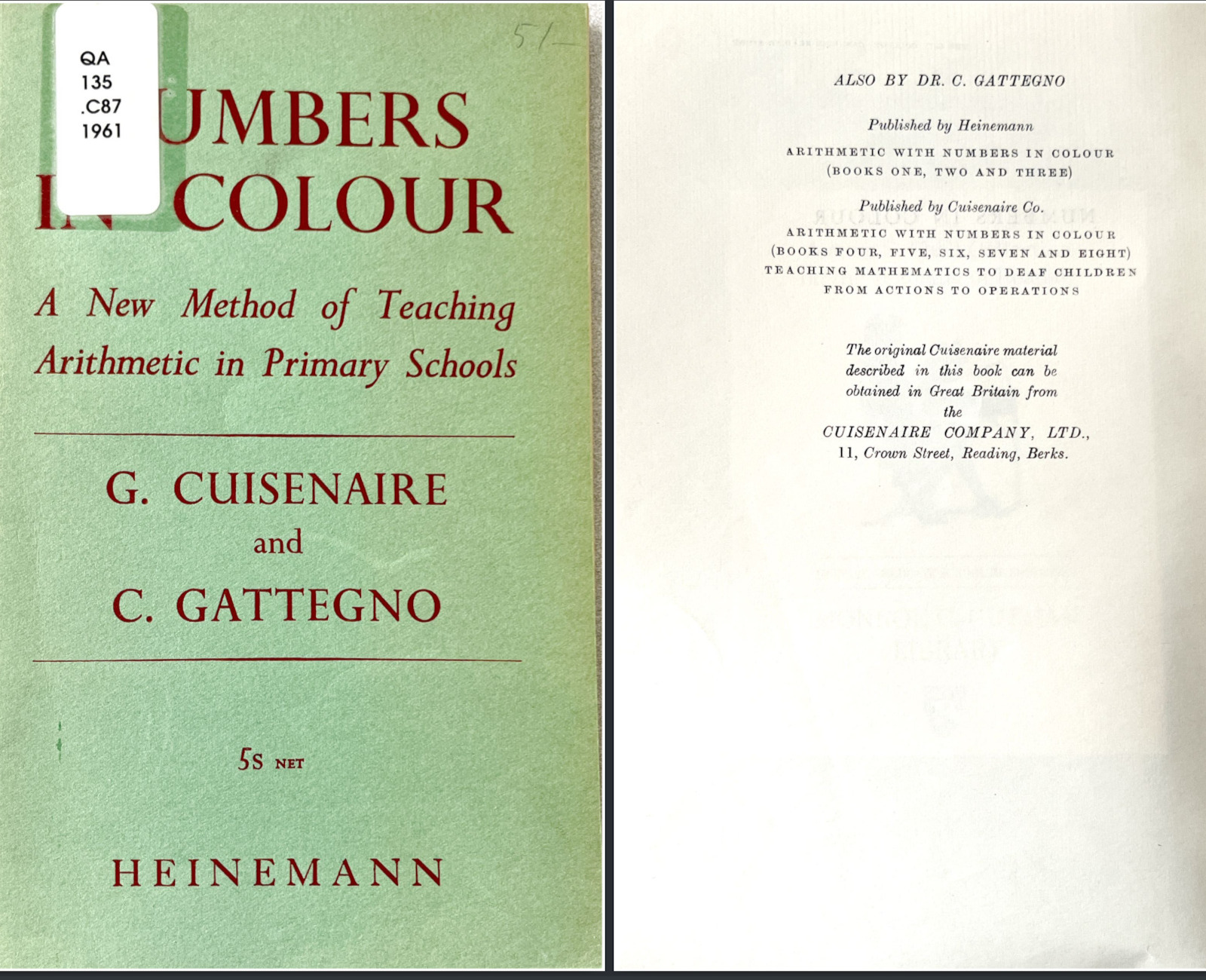}}
\scalebox{0.62}{\includegraphics{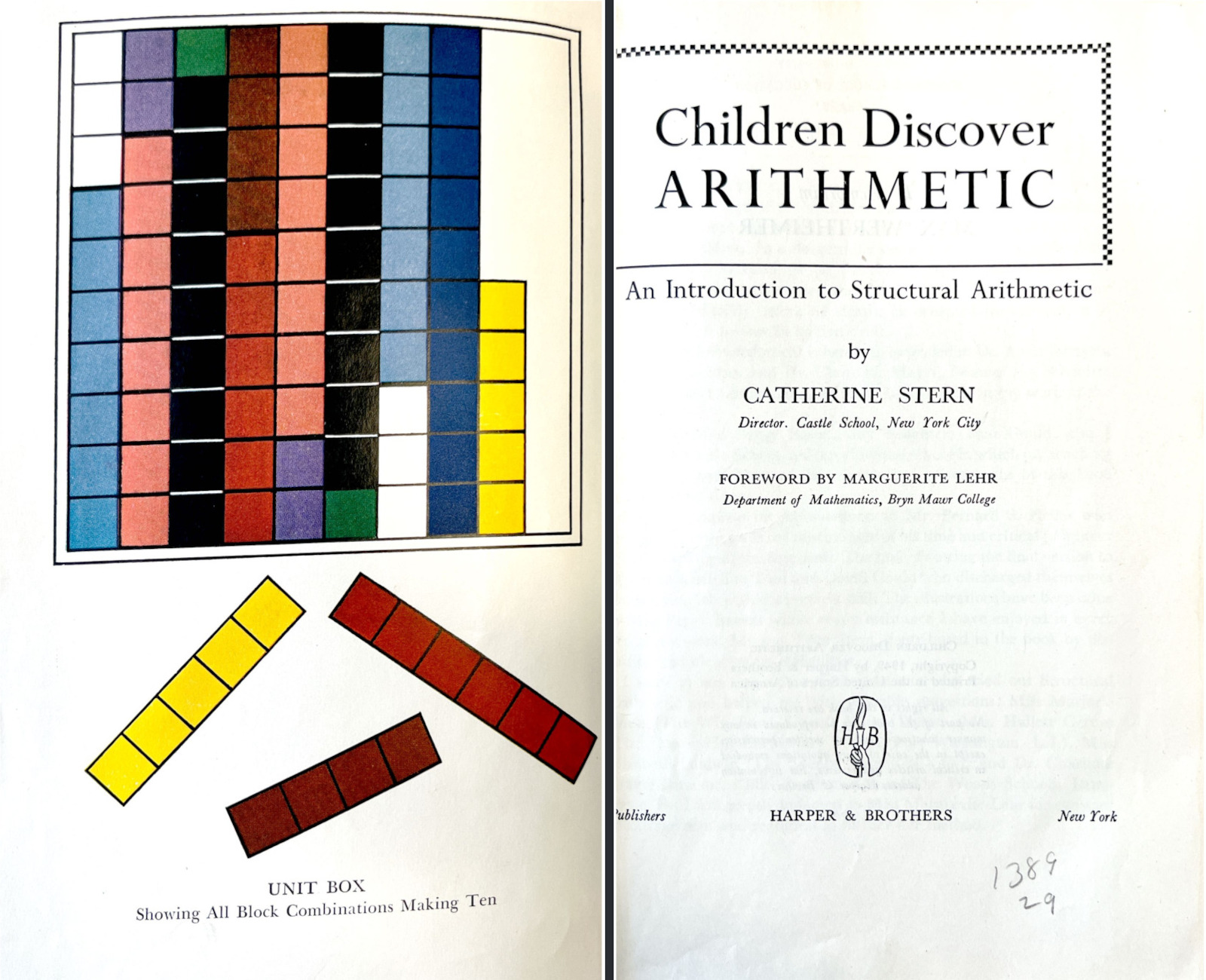}}
\label{Movie}
\caption{
The book cover of Stern \cite{Stern1949} from 1949 
and the book cover of Cuisenaire-Gattegno \cite{CuisenaireGattegno} from 1954.
}
\end{figure}

\paragraph{}
The first colored Cuisenaire were known since 1947 and were
published in 1951. It appears as if the approach of Catherine Stern was independent
from Cuisenaire's and that Stern was influenced by Montessori.
The ``Stern Math" approach is also still used today. On the website of the organization
is a quote of Einstein saying 
{\it "I believe that her (Catherine Stern's) idea is sound would be of real value
in the teaching of the elements of arithmetic".}

\begin{figure}[!htpb]
\scalebox{0.32}{\includegraphics{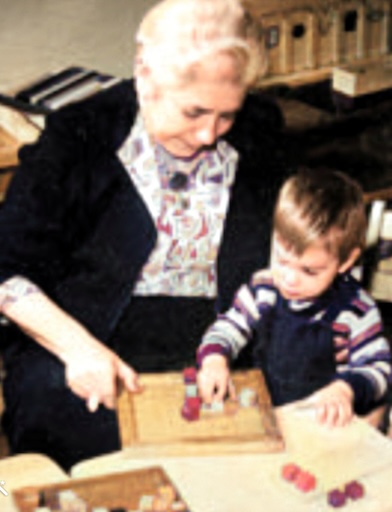}}
\scalebox{0.32}{\includegraphics{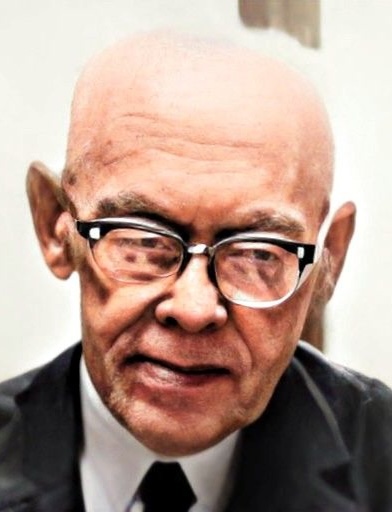}}
\label{People}
\caption{
We see the two pioneers George Cuisenaire  and Catherine Stern. 
The my-heritage colorized photo shows her in 1950 with her grandson Fred.
According to \cite{Sternmath}, Catherine stern was influenced by 
Maria Montesori's ideas and presented arithmetic manipulatives in 1934 during a
conference of the Swiss Kindergarden Association. She immigrated to the US in 1938. 
The photo of George Cuisenaire is from \cite{Cutler2023}, also colorized 
with my-heritage.
}
\end{figure}

\paragraph{}
Research in partitions was in full swing at the time of Ramanujan
who would mention partitions already in his first letter to 
Hardy in January 1913.

\begin{figure}[!htpb]
\scalebox{0.40}{\includegraphics{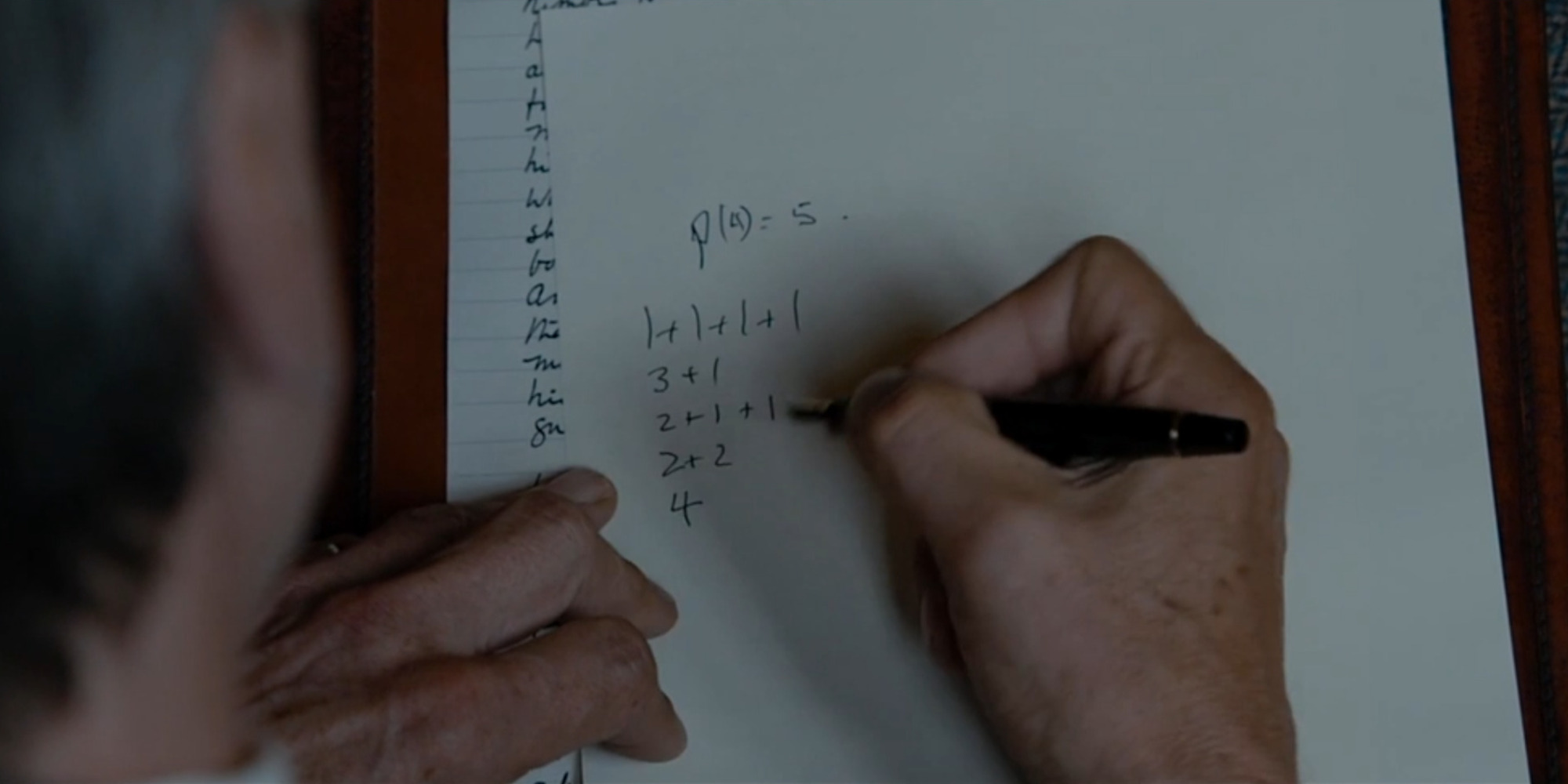}}
\scalebox{0.40}{\includegraphics{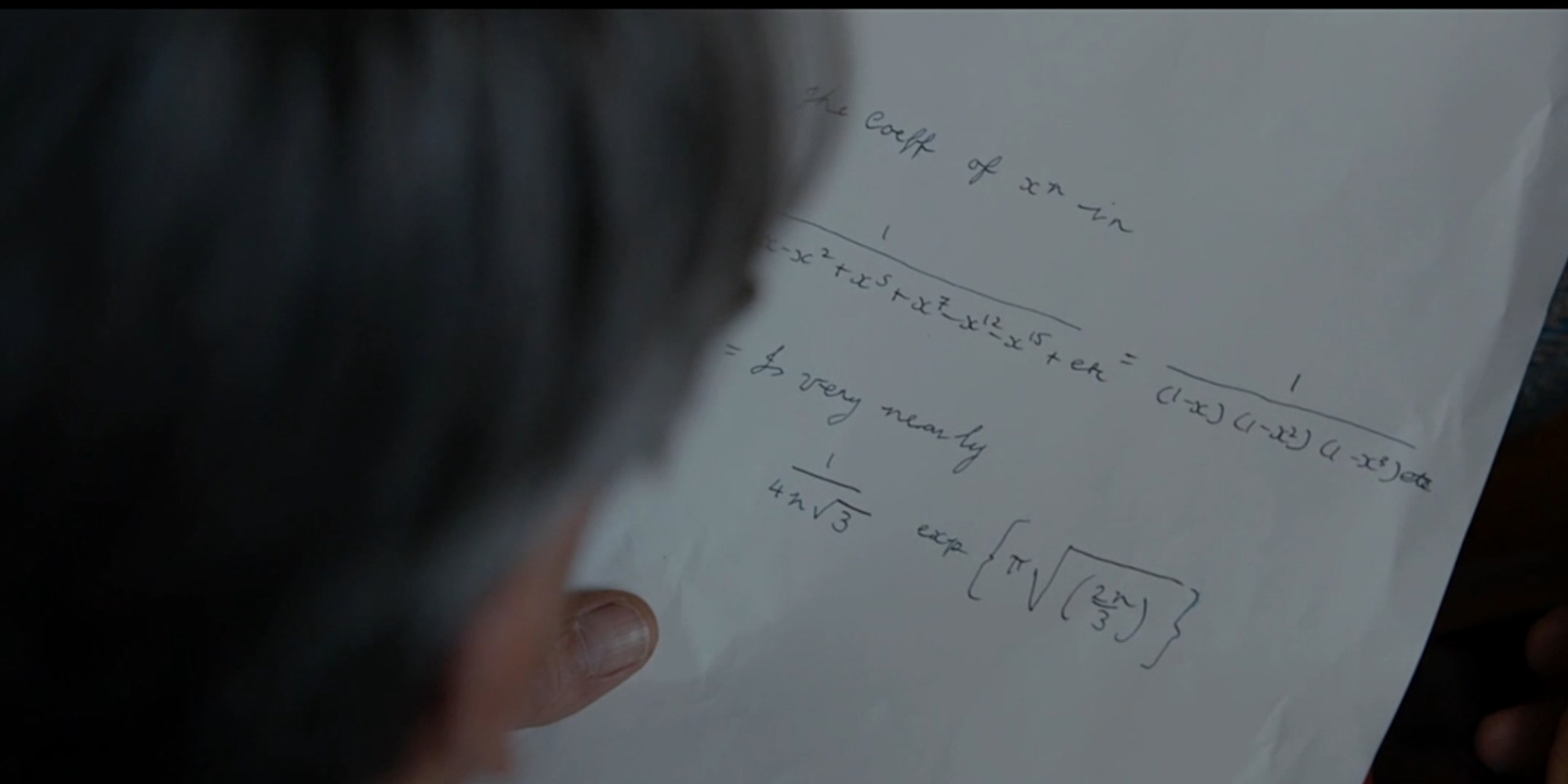}}
\label{Movie}
\caption{
Hardy (played in the movie by Jeremy Irons) explains the asymptotic 
Hardy-Ramanujan formula for the partition function in the 2015 film: 
``The man who knew infinity".
}
\end{figure}

\paragraph{}
Partitions are mentioned in the book about Ramanujan 
\cite{Kanigel} (pages 246-250). The movie adaptation of the book 
makes partitions to a small drama in the form of a competition. 
Hardy explains the problem in the movie similarly as in the book with 
a small example $p(4)$.  One can the see in a scene 
a manuscript showing the Hardy-Ramanujan formula
$p(n) \sim R(n)=1/(4n \sqrt{3}) \exp(\pi \sqrt{2n/3})$ meaning 
$p(n)/R(n) \to 1$. 

\nopagebreak 

\section{Creativity}

\paragraph{}
Partitions appear when defining discrete sub-manifolds of given manifolds 
\cite{ManifoldsPartitions}. While working on this emerged the picture that
the addition of two partitions $n=(n_1 \dots, n_k)$, $m=(m_1, \dots, m_l)$,
in the form of concatenation $n+m = (n_1, \dots, n_k,m_1, \dots, m_l)$
and the product by foiling out:
$n*m = (n_1 m_1, n_2 m_1, \dots, n_k m_1, \dots, n_1 m_k \dots, n_k m_l)$.
For example $(3,4) + (1,1,2) = (3,4,1,1,2)$ and $(3,4)*(1,1,2) = (3,3,6,4,4,8)$.
It leads to the idea to associate to a partition $n=(n_1,\dots, n_k)$ the
$k$-partite graph $K_{n_1,\dots, n_k}$. The partition $5=K_{1,1,1,1,1}$ for example 
is $K_5$ and $6==K_{3,3}$ is the utility graph. 
The addition is the Zykov join and the multiplication is the Sabidussi multiplication.
This ring structure on multi-partite graph is dual to the Shannon ring structure, where
addition is disjoint union and multiplication is Shannon multiplication.

\paragraph{}
Let me add a personal remark about education, which is very much related
to my early exposure to Cuisenaire sticks.
In the last 40 years, one can observe a {\bf standardization tendency} 
in K-12 education. The reason is a reflex to {\bf minimize risk}
from the administration side and to make sure that basic goals are achieved. 
But chaining of teachers to fixed curricula indicates 
an increasing lack of trust for individual initiative. It is a vicious
circle because such a system discourages creativity, it also is 
also less likely attracts creative minds into the profession. 
It leads to a de-valuation of public schools, the emergence of 
after-school programs or private schools, which are not chained to such 
administrative handcuffs and where teachers can fully show their potential. 

\paragraph{}
In the primary school up to high school of 50 years ago, our teachers could get us out into the open,
mix art or sport with mathematics without administrative oversight.  
I myself enjoyed orienteering competitions in larger forest with professional
orienteering maps and compass learning about direction, level curves, or 
excursions to collect plants in order to build our own botany book,
or  collect rock samples, climb on the cold volcanoes in the southern 
Germany, or visit the Danube sinkhole. In high school chemistry, we were for once 
just given a powder and access to the generous school chemistry lab. 
The task was to use whatever tool we could find and gets hand on 
to figure out what the powder was. There were no worksheets,
just expert guidance and supervision by teacher and 
an expert lab assistant helping to keep us safe. I still remember 
the excitement after finding out that the powder given to us was ``Aspirin".

\paragraph{}
The freedom which these teachers had appears more exceptional today. 
It has become the privilege of private institutions. The times of the ``Black Mountain college" 
\cite{Schoon,DiazExperimenters}, where rival methodologies were explored and experimental forms were practiced,
are long over. (The college is featured in the 2013 novel "The longest ride" by Nickolas Sparks, and also
made it into a movie in 2015, where the free teaching style at the college is very well visible.)

\paragraph{}
By the way, the mathematician Max Dehn, who was a student of Hilbert, had taught from 1945-1952 
at the Black Mountain college and is buried there \cite{Sher}. 
The Wikipedia article about him (which follows \cite{Sher}) states {\it He enjoyed the forested mountains found 
in Black Mountain, and would often hold class in the woods, giving lectures during hikes.
His lectures frequently drifted off topic on tangents about philosophy, the arts, and nature and their 
connection to mathematics. He and his wife took part in community meetings and often ate in the dining room. 
They also regularly had long breakfasts with Buckminster Fuller and his wife.} As as source,
\cite{Peifer2011} is given. 

\paragraph{}
The following quotation given in \cite{Sher} about math education is remarkable:
{\it All instruction, including mathematics, should be an end in itself, not, in the 
first place, means. Therefore in mathematics one should be so simple and elementary 
as possible so that the structure of mathematical thinking becomes clear. If this 
is achieved then the student will have less difficulty to see mathematical
structures in physical phenomena or even in phenomena of biology and social life.}
And from \cite{Peifer2011}: {\it As Dehn was famous for his way of picturing 
abstract concepts, many of his great mathematical ideas can be intuitively 
understood through pictures.}

\begin{figure}[!htpb]
\scalebox{0.12}{\includegraphics{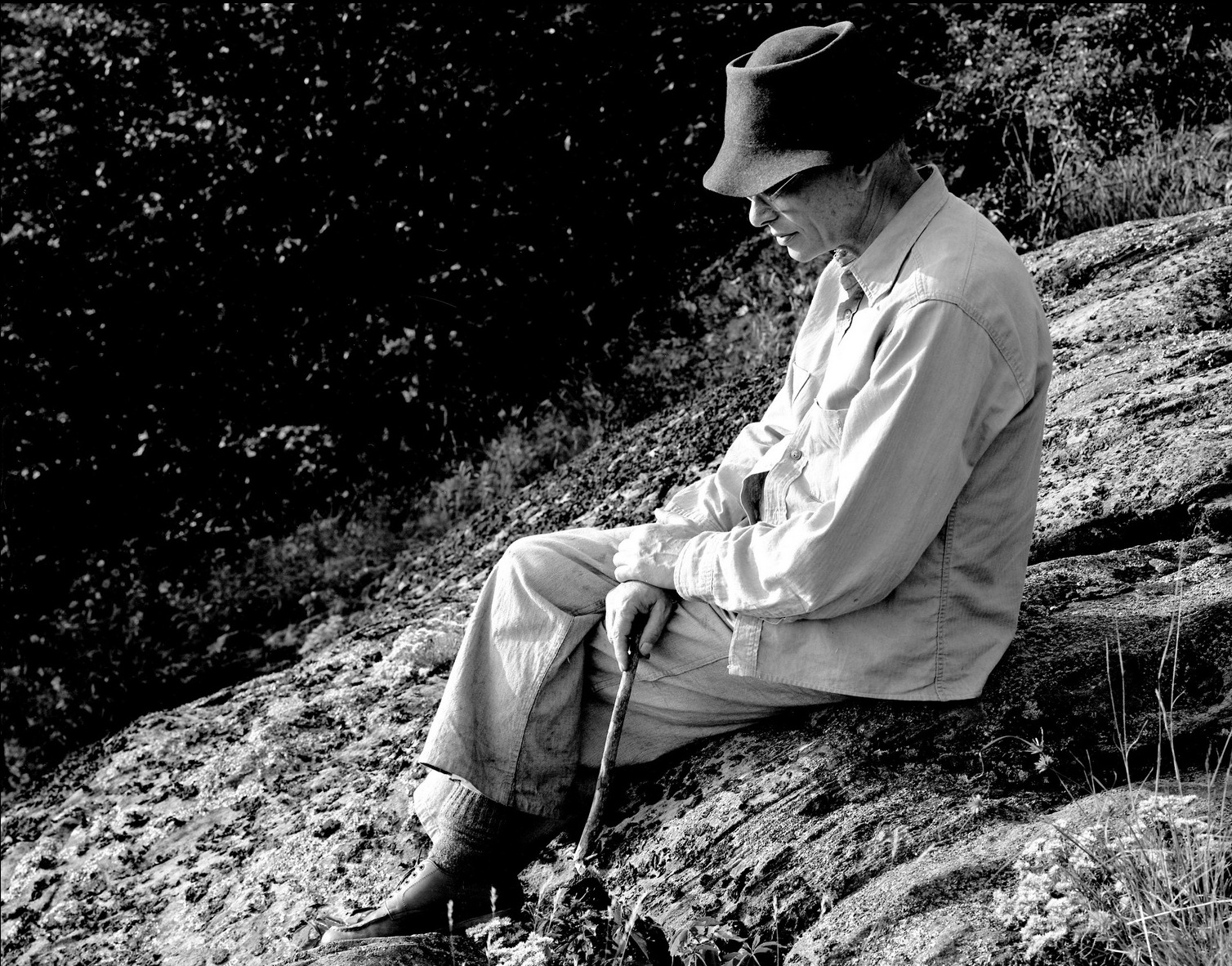}}
\label{cuisenaire} 
\caption{
This photo credited to the photography critic {\bf Nancy Newhall} (1908-1974) 
is titled ``Max Dehn at the Black Mountain College" is dated around  1947.
The gelatin silver negative had been gifted by {\bf Christi Newhall} 
to the Eastman Museum, where one can find a high resolution version. 
Nancy Newhall visited together with her husband Beaumont Newhall the Black Mountain College 
for three summers in the years 1946-1949. The Newhalls photographed during that time 
the campus and its people.
}
\end{figure}

\paragraph{}
Also in higher education, the 
administrative cages have been rolled out more and more.
We could still develop my calculus courses independently without oversight 10 years ago. 
Of course, there were course goals and coordination between different courses but the 
details, the homework or lesson plans were not prescribed. 
Today, we live in a time, with much more constraints. It appears safer like that but
a teaching profession that lacks initiative and originality will be 
replaced by technology much faster than anticipated. 

\section{More personal remarks}

\paragraph{}
I myself got introduced to numbers using the Cuisenaire material \cite{CuisenaireGattegno}.
This had happened on the personal initiative by my first
grade teacher {\bf Edwin Ilg} \cite{Schulblatt1968}
who worked at a time, when teachers could shape their curricula.
We must maybe stress that this happened at an ``ordinary" public school 
in Uhwiesen, a village near Schaffhausen in Switzerland.
I still own the same Cuisenaire sticks from first grade. 

\paragraph{}
The independent research done in high-school had led to the theorem mentioned above and others like the 
Pentagonal number theorem (which I however could not prove then). 
The topic of partitions came up during the winter 2023/2024 in the context of geometry, when looking
again at the {\bf discrete Sard theorem}. That theorem has classically has been developed 
by Morse and Sard \cite{Morse1939,Sard1942}. It is pretty remarkable that one can do a similar result
within finite mathematics: first define carefully what a $d$-manifold is (a finite simple graph for which 
every unit sphere is a $(d-1)i$-sphere, where a $d$-sphere is a $d$-manifold that becomes contractible if
a vertex is taken away. In 2015, I noticed that one can define hyper-surfaces in such a manifold by looking
at all simplices on which the function changes sign \cite{KnillSard}. The only requirement is that the function
$f$ is locally injective (a coloring).  During Fall 2023 this was then generalized to higher dimensions
\cite{ManifoldsPartitions}

\paragraph{}
Again related to teaching, when leading a day long workshop in 2017 on the ``Adventures in algebra",
\cite{AdventuresAlgebra}, I wanted to make the point of "being creative" and spent the 
winter 2016/2017 preparing for that January 2017 event in McAllen in the Rio Grande Valley. 
During that exploration time, I wondered whether there is a multiplication which fits the join 
operation in graph theory. A longer search revealed the Sabidussi multiplication (as I later would find out). 
It is frequent in research that one rediscovers something that has already been found. 
But as with the high school research in number theory, it does not diminish 
the joy of finding it. It is also fun to find something that has already been known. 

\begin{figure}[!htpb]
\scalebox{0.07}{\includegraphics{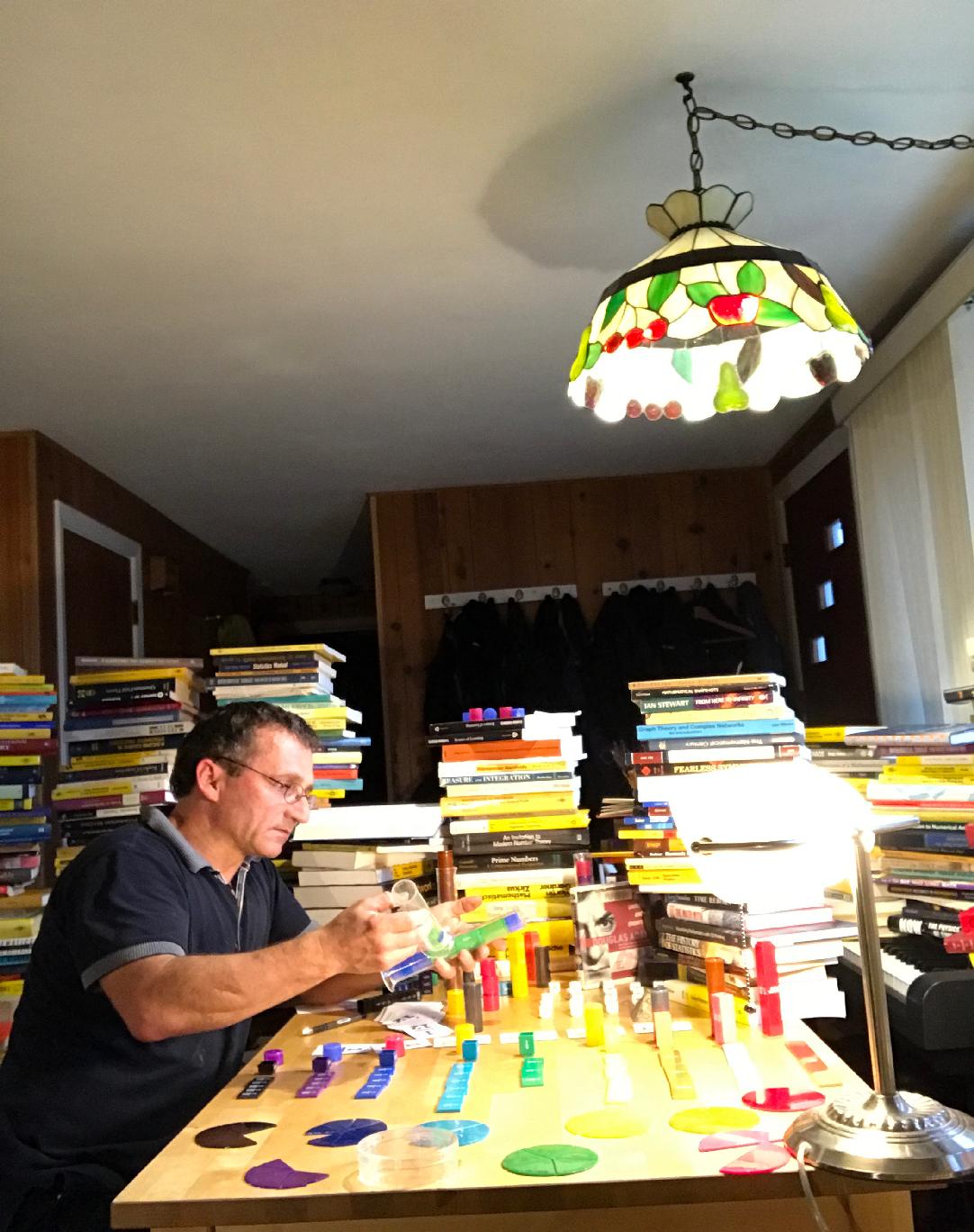}}
\scalebox{0.07}{\includegraphics{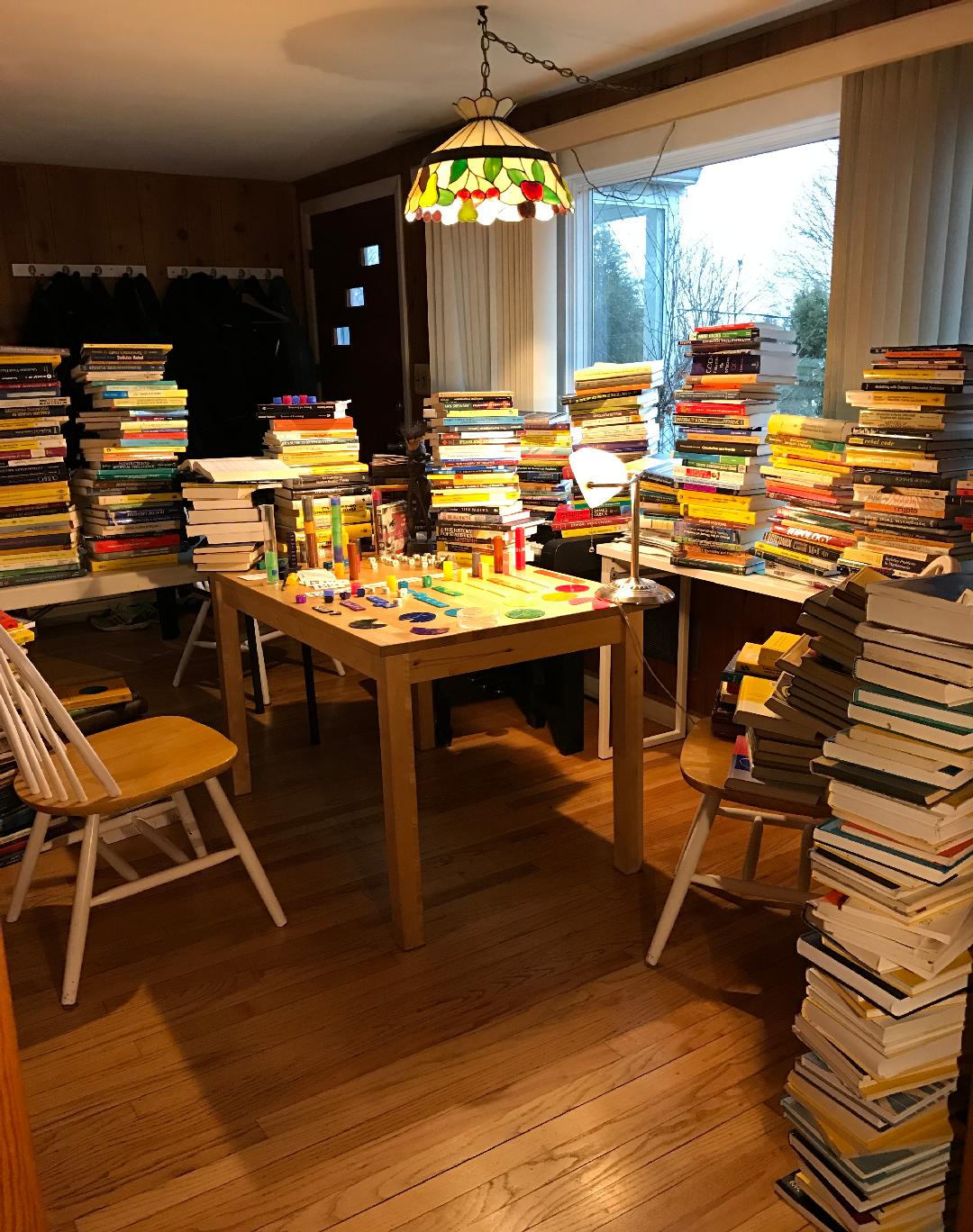}}
\scalebox{0.07}{\includegraphics{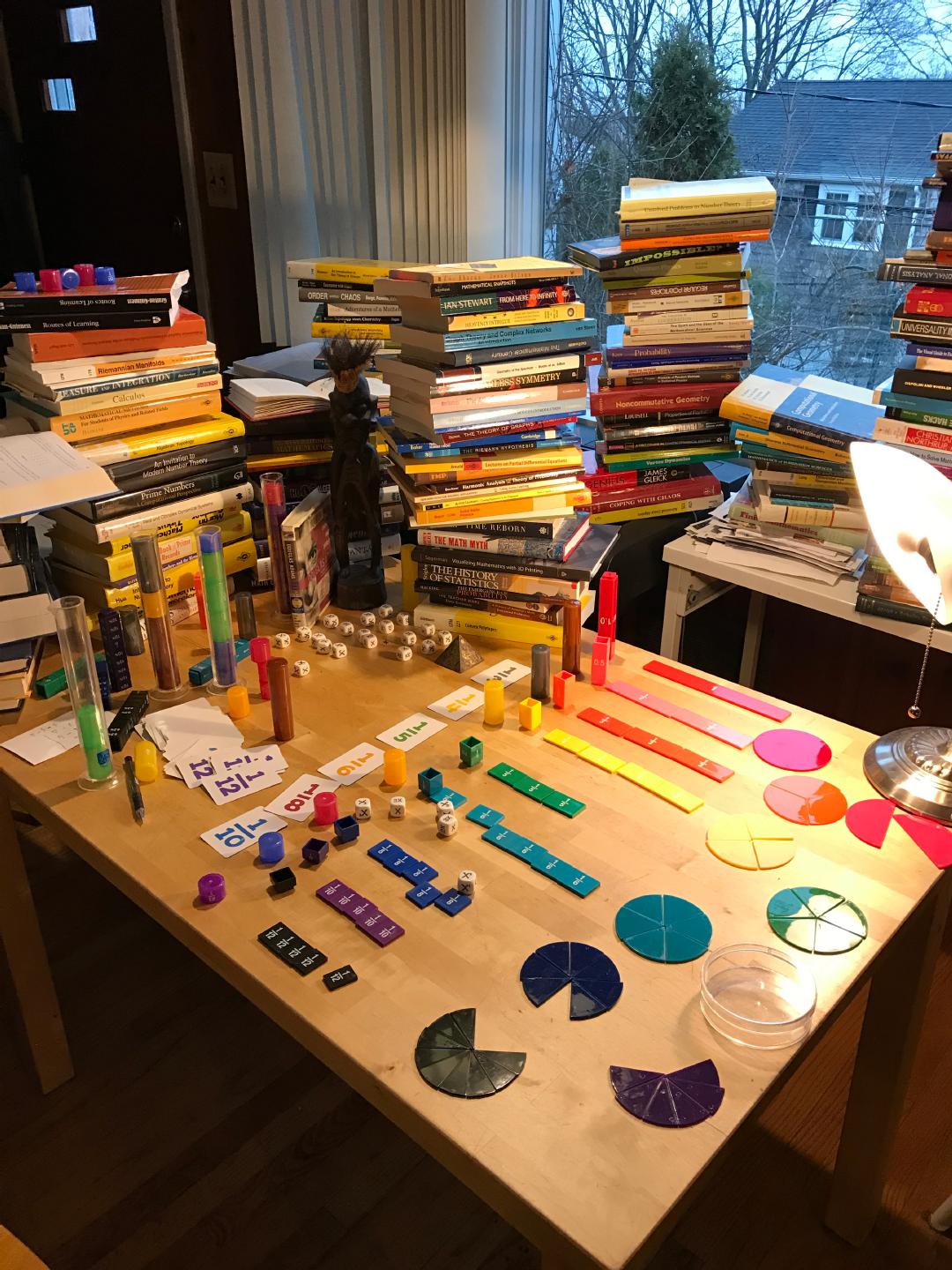}}
\scalebox{0.07}{\includegraphics{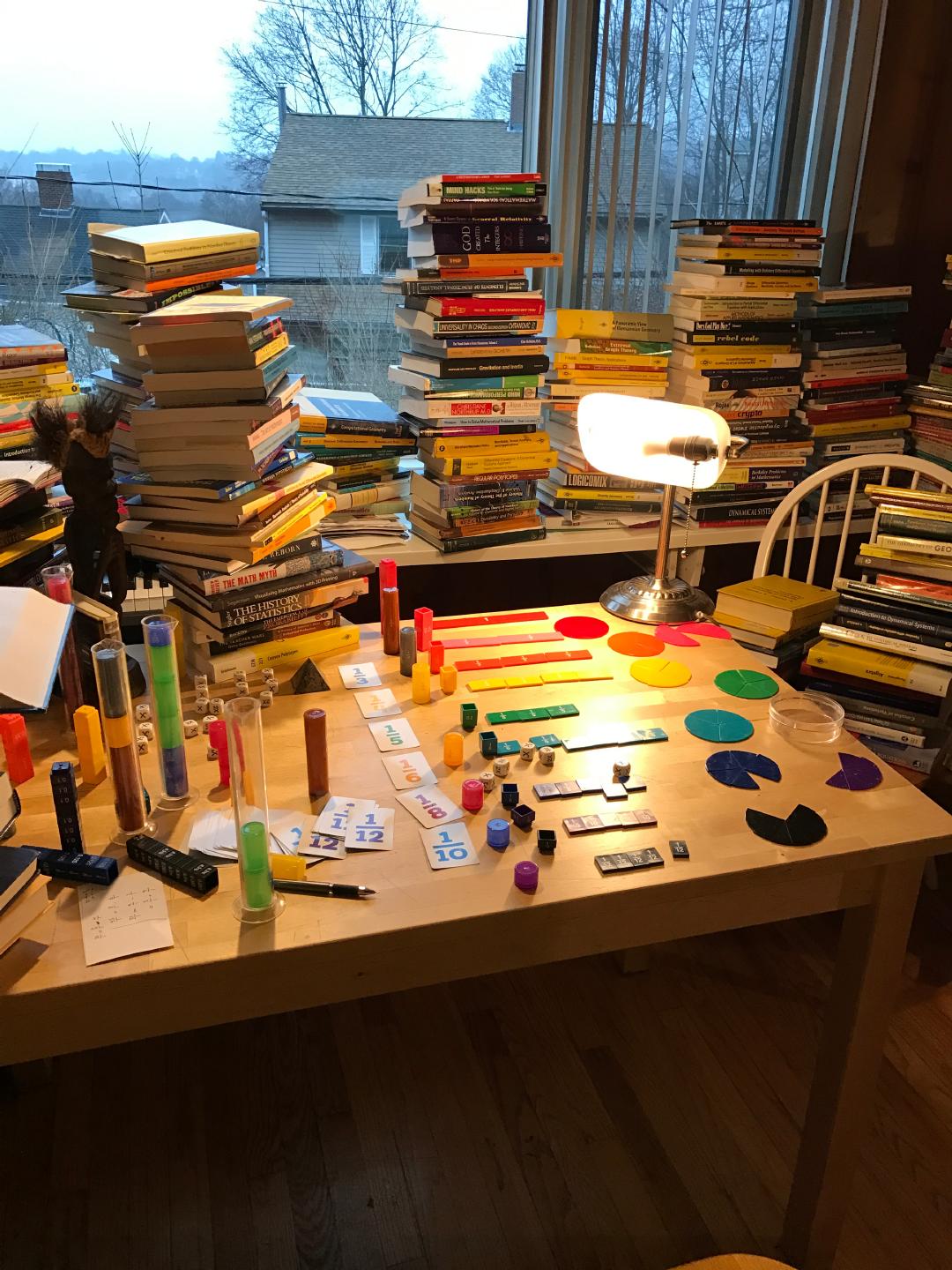}}
\scalebox{0.07}{\includegraphics{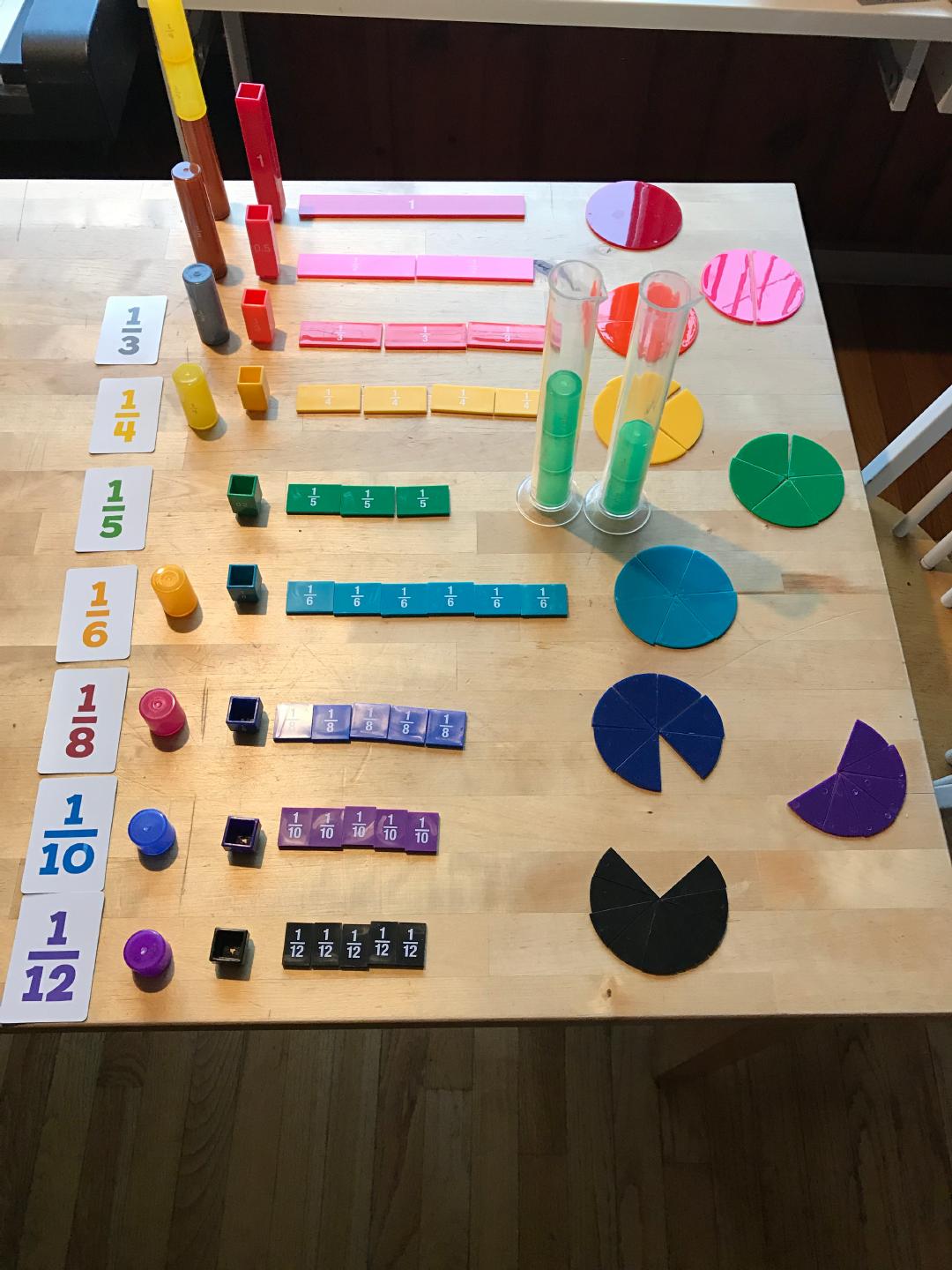}}
\scalebox{0.07}{\includegraphics{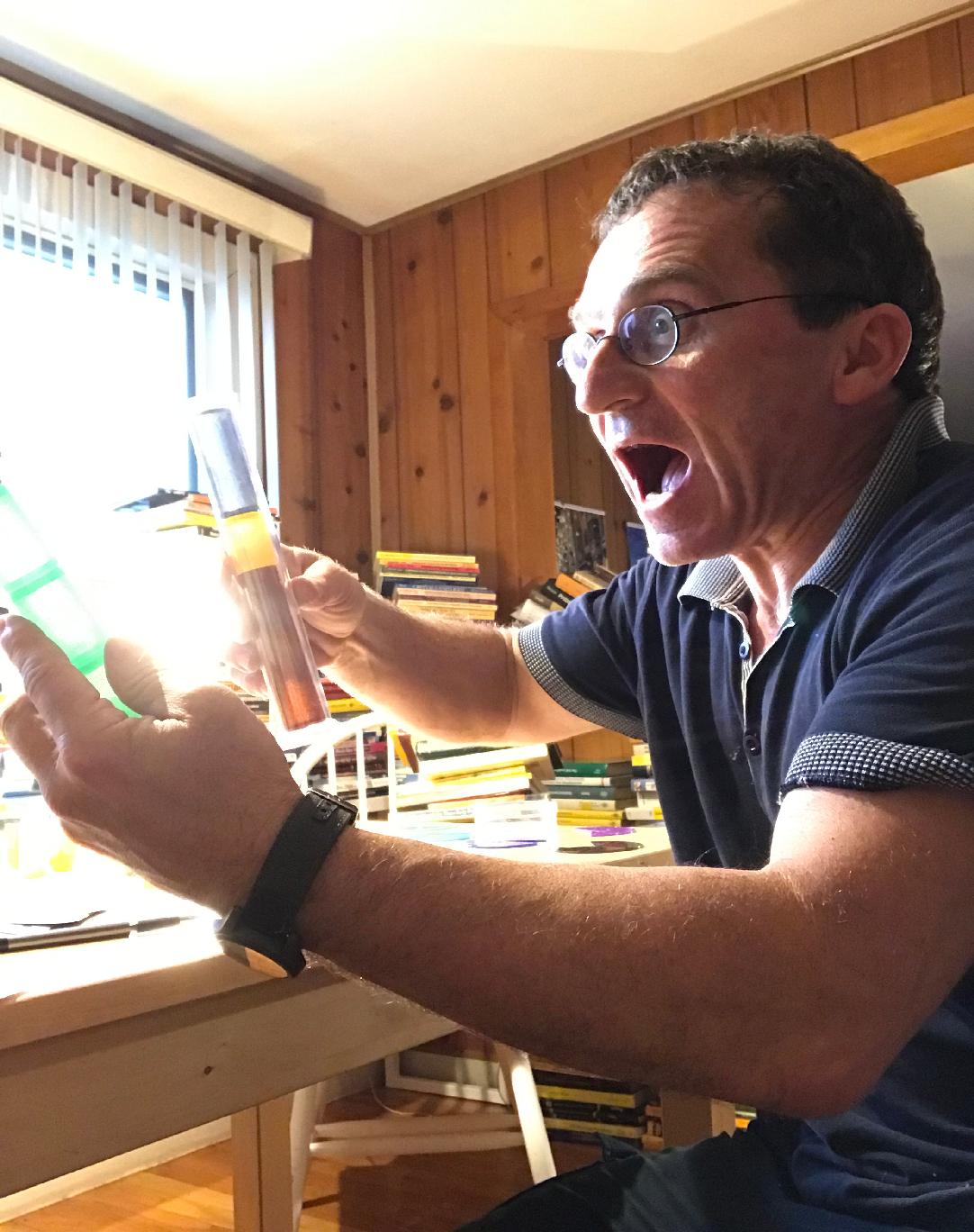}}
\label{Frac lab}
\caption{
An undercover fraction lab, operative in the winter 2016/2017. 
}
\end{figure}

\paragraph{}
During my adventures in algebra, I had not yet known about the Sabidussi ring
and the Shannon ring and was primarily focussed also in the case of ``additive
primes" in the Zykov monoid. Taking graph complements showed there that the
additive primes in the Zykov monoid are the graphs for which the graph complement is 
connected. The question to characterize multiplicative primes in the Shannon ring
is interesting too. I took up the topic in this document again now in the fall of 2024
while teaching a tutorial Math 99R at Harvard on "Visualization of Mathematics" and
thought about how to characterize multiplicative primes in the ring $\mathcal{P}$ of
partitions. (Characterizing additive primes in $\mathcal{P}$ is a nice exercise left 
for the reader.)

\section{Rings of Partitions}

\paragraph{}
To represent partitions uniquely in the ring of partitions, we always sort them. 
Apropos primes, we end this exposition with an open question. How can we characterize 
primes in the ring of partitions? We have seen that $n=(n_1, \dots, n_k)$ is prime if either 
$n=\sum_j n_j$ is prime or if $k$ is prime. But there are more primes like 
$p=(3,4,5,2)$ in which case both $\sum_j n_j=14$ and $k=4$ are not prime, but where $p$
can not be factored into smaller components. The only possible factors would be of the form 
$(a,b), (c,d)$ and since $3,5$ are prime, it would force either $(a,b)=(1,1)$ or $(c,d) = (1,1)$
or $(a,b)=(1,b)$ and $(c,d)=(1,d)$ which all do not work. 

\paragraph{}
This motivates to investigate the growth rate of {\bf partition primes}, similarly as we
are interested in the growth rate of {\bf rational primes}.
Unexplored is whether one can use partitions for cryptology. 
Given a rational integer $N \in \mathbb{N}$, is there a finite commutative ring to 
work with? We have experimented with some set-up. 
One could try to use it for schemes like {\bf Diffie-Hellman key exchange} or 
{\bf RSA public key systems} in such a {\bf modular ring of partition}. 
In any finite Abelian group, there are cryptological algorithms to factor integers could
be used \cite{Riesel}. A simple example is the Pollard $\rho$ algorithm. 
It only requires to build in fast ways to form $a^n$ if $a \in \mathcal{P}$ and 
$n \in \mathbb{Z}$ and to build the greatest common divisor ${\rm gcd}(n,m)$  
of two elements $n,m$ in $\mathcal{P}$.

\bibliographystyle{plain}

\end{document}